\newtheorem*{thm*}{Theorem 1}
\newtheorem{thm}{Theorem}
\newtheorem{exam}{Example}
\newtheorem{lemma}{Lemma}
\newtheorem{remark}{Remark}
\newtheorem{prop}{Proposition}
\begin{document}

\title{title}

\author{A. Aghajani}
\address{School of Mathematics, Iran University of Science and Technology, Narmak, Tehran, Iran \&  School of Mathematics, Institute for Research in Fundamental Sciences (IPM), P.O. Box 19395-5746, Tehran, Iran}
\email{aghajani@iust.ac.ir}

\author{C. Cowan}
\address{Department of Mathematics, University of Manitoba, Winnipeg, Manitoba, Canada R3T 2N2}
\email{craig.cowan@umanitoba.ca}

\author{S.H. Lui}
\address{Department of Mathematics, University of Manitoba, Winnipeg, Manitoba, Canada R3T 2N2}
\email{shaun.lui@umanitoba.ca}

\def\d{ \partial_{x_j} }
\def\Na{{\mathbb{N}}}

\def\Z{{\mathbb{Z}}}

\def\IR{{\mathbb{R}}}

\newcommand{\E}[0]{ \varepsilon}

\newcommand{\la}[0]{ \lambda}

\newcommand{\s}[0]{ \mathcal{S}}

\newcommand{\AO}[1]{\| #1 \| }

\newcommand{\BO}[2]{ \left( #1 , #2 \right) }

\newcommand{\CO}[2]{ \left\langle #1 , #2 \right\rangle}

\newcommand{\R}[0]{ \IR\cup \{\infty \} }

\newcommand{\co}[1]{ #1^{\prime}}

\newcommand{\p}[0]{ p^{\prime}}

\newcommand{\m}[1]{   \mathcal{ #1 }}

\newcommand{ \W}[0]{ \mathcal{W}}

\newcommand{ \A}[1]{ \left\| #1 \right\|_H }

\newcommand{\B}[2]{ \left( #1 , #2 \right)_H }

\newcommand{\C}[2]{ \left\langle #1 , #2 \right\rangle_{  H^* , H } }

 \newcommand{\HON}[1]{ \| #1 \|_{ H^1} }

\newcommand{ \Om }{ \Omega}

\newcommand{ \pOm}{\partial \Omega}

\newcommand{\D}{ \mathcal{D} \left( \Omega \right)}

\newcommand{\DP}{ \mathcal{D}^{\prime} \left( \Omega \right)  }

\newcommand{\DPP}[2]{   \left\langle #1 , #2 \right\rangle_{  \mathcal{D}^{\prime}, \mathcal{D} }}

\newcommand{\PHH}[2]{    \left\langle #1 , #2 \right\rangle_{    \left(H^1 \right)^*  ,  H^1   }    }

\newcommand{\PHO}[2]{  \left\langle #1 , #2 \right\rangle_{  H^{-1}  , H_0^1  }}

 \newcommand{\HO}{ H^1 \left( \Omega \right)}

\newcommand{\HOO}{ H_0^1 \left( \Omega \right) }

\newcommand{\CC}{C_c^\infty\left(\Omega \right) }

\newcommand{\N}[1]{ \left\| #1\right\|_{ H_0^1  }  }

\newcommand{\IN}[2]{ \left(#1,#2\right)_{  H_0^1} }

\newcommand{\INI}[2]{ \left( #1 ,#2 \right)_ { H^1}}

\newcommand{\HH}{   H^1 \left( \Omega \right)^* }

\newcommand{\HL}{ H^{-1} \left( \Omega \right) }

\newcommand{\HS}[1]{ \| #1 \|_{H^*}}

\newcommand{\HSI}[2]{ \left( #1 , #2 \right)_{ H^*}}

\newcommand{\WO}{ W_0^{1,p}}
\newcommand{\w}[1]{ \| #1 \|_{W_0^{1,p}}}

\newcommand{\ww}{(W_0^{1,p})^*}

\newcommand{\Ov}{ \overline{\Omega}}

\title{A nonlinear  elliptic problem involving the gradient on a half space}
\maketitle

\begin{abstract}
We consider perturbations of the  diffusive Hamilton-Jacobi equation
\begin{equation*} 
 \left\{ \begin{array}{lcl}
\hfill  -\Delta u  &=&  (1+g(x))| \nabla u|^p\qquad \mbox{ in } \IR^N_+,   \\
\hfill  u &=& 0 \hfill \mbox{ on }   \partial \IR^N_+,
\end{array}\right.
  \end{equation*}
  for $ p>1$.  We prove the existence of a classical solution provided $ p \in (\frac{4}{3},2)$ and $g$ is bounded with uniform radial decay to zero.

\end{abstract}

\noindent
{\it \footnotesize 2010 Mathematics Subject Classification.  35J60.} {\scriptsize }\\
{\it \footnotesize Key words: Nonlinear elliptic problem, Diffusive Hamilton-Jacobi equations, Liouville-type theorem}. {\scriptsize }

\section{Introduction}

In this work we will investigate perturbations of
\begin{equation} \label{non_pert}
 \left\{ \begin{array}{lcl}
\hfill  -\Delta u  &=&  | \nabla u|^p\qquad \mbox{ in } \IR^N_+,   \\
\hfill  u &=& 0 \hfill \mbox{ on }   \partial \IR^N_+,
\end{array}\right.
  \end{equation}
where $\IR_+^N=\{(x_1,...,x_N)\in\IR^N,~~x_N>0\}$  and $\frac{4}{3}<p<2$.  In particular we are interested in classical nonzero solutions.

\begin{exam}  For $ t>0$ set
\[ u_t(x):=\int_0^{x_N} \frac{1}{ \left( (p-1) y + t \right)^\frac{1}{p-1}} dy. \]   A computation shows that for $ p>1$, $u_t$ is a classical solution of $(\ref{non_pert})$. For $p>2$ the solution is unbounded when $ x_N \rightarrow \infty$ and when $ 1<p<2$ the solution is bounded.  Note that this solution has a closed form. Also note that $u_t$ converges to zero as $ t \rightarrow \infty$.
\end{exam}

A particular perturbation of the above problem will be
\begin{equation} \label{pert_ha}
 \left\{ \begin{array}{lcl}
\hfill  -\Delta u  &=& (1+g(x)) | \nabla u|^p\qquad \mbox{ in } \IR^N_+,   \\
\hfill  u &=& 0 \hfill \mbox{ on }   \partial \IR^N_+.
\end{array}\right.
  \end{equation} In particular we are interested in nonzero solutions of (\ref{pert_ha}) for sufficient smooth functions $g$ which satisfy needed assumptions.  Our approach will be to linearize around $u_t$ to obtain solutions of (\ref{pert_ha}).

  We now state our main theorem.

\begin{thm} \label{main_t}Suppose $ \frac{4}{3}<p<2$ and  $ g$ is bounded, H\"older continuous and satisfies
\[ \sup_{|x|>R, \ x_N \ge 0} | g(x)| \rightarrow 0 \qquad \mbox{ as $ R \rightarrow \infty$.}\]  Then there is a nonzero classical solution of $(\ref{pert_ha})$.
\end{thm}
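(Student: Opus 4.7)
\medskip

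\noindent\textbf{Proof plan.}

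The plan is to perturb off the explicit family $u_t$ provided in the Example: we search for a solution of (\ref{pert_ha}) in the form $u = u_t + v$ where $t$ is taken sufficiently large (so that $|\nabla u_t|$ is uniformly small) and $v$ is a small correction. Substituting into (\ref{pert_ha}) and using $-\Delta u_t = |\nabla u_t|^p$ gives
\begin{equation*}
-\Delta v \;=\; \bigl(|\nabla(u_t+v)|^p - |\nabla u_t|^p - p|\nabla u_t|^{p-2}\nabla u_t\cdot\nabla v\bigr) + p|\nabla u_t|^{p-2}\nabla u_t\cdot\nabla v + g(x)|\nabla(u_t+v)|^p,
\end{equation*}
so the linearized problem around $u_t$ is governed by the operator
\begin{equation*}
L_t v \;:=\; -\Delta v \;-\; \frac{p}{(p-1)x_N+t}\,\partial_{x_N}v,
\end{equation*}
since $\nabla u_t$ points in the $e_N$ direction with length $((p-1)x_N+t)^{-1/(p-1)}$. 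The equation for $v$ then takes the schematic form $L_t v = g(x)|\nabla u_t|^p + \mathcal{Q}(v)$, where $\mathcal{Q}$ is quadratic in $\nabla v$ near $0$ and also carries a factor $(1+g)$.

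The first step is to set up solvability for the linear problem $L_t w = f$ on $\IR^N_+$ with zero Dirichlet data, in weighted Hölder spaces designed to capture the anisotropic behaviour in $x_N$ coming from the drift coefficient $\frac{p}{(p-1)x_N+t}$. I would prove a maximum principle / a~priori $C^{2,\alpha}$ estimate for $L_t$ in these spaces, where the weight encodes decay as $|x|\to\infty$ that is at least comparable to $|\nabla u_t|^p \sim ((p-1)x_N+t)^{-p/(p-1)}$ so the source $g(x)|\nabla u_t|^p$ lies in the target space; the hypothesis that $g$ vanishes uniformly at infinity is exactly what guarantees the right decay. Injectivity of $L_t$ on the chosen weighted space is established by a Liouville-type argument (as the keywords suggest): any bounded entire solution of $L_t w=0$ with vanishing boundary data and appropriate decay must be identically zero, which combined with the a~priori estimate produces the inverse $L_t^{-1}$ with a bound independent of $t$ for $t$ large.

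The second step is a contraction mapping argument. Writing $T(v) := L_t^{-1}\bigl(g|\nabla u_t|^p + \mathcal{Q}(v)\bigr)$, I would show that for $t$ sufficiently large $T$ maps a small ball in the weighted Hölder space into itself and is a contraction: the inhomogeneous term has norm of order $\|g\|_\infty\|u_t\|$ which tends to $0$ as $t\to\infty$, and $\mathcal{Q}(v)$ is quadratic in $v$ times a factor that degenerates like $|\nabla u_t|^{p-2}$, so the range $p>\frac{4}{3}$ is expected to be precisely what is needed to control the quadratic term against the weight coming from $L_t^{-1}$ (this is where the lower constraint $p>\frac43$ enters). Banach's fixed point theorem then yields a solution $v$, and since the resulting $u = u_t + v$ differs from $u_t$ only by a small correction in a space that enforces $v\not\equiv -u_t$, the solution $u$ is nonzero. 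Standard elliptic regularity upgrades $u$ to a classical solution using that $g$ is H\"older continuous.

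The main obstacle is the linear analysis: proving that $L_t$ is invertible between appropriate function spaces on the unbounded half space, with bounds uniform in $t$, and identifying a weighted norm in which both $g|\nabla u_t|^p$ is small and the quadratic nonlinearity $\mathcal{Q}$ is controlled. The restriction $\frac{4}{3}<p<2$ is what balances the decay of $|\nabla u_t|$, the drift in $L_t$, and the quadratic remainder.
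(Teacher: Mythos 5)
Your high-level plan (perturb off $u_t$, build a linear theory in weighted spaces, then run a contraction) matches the paper's skeleton, and you correctly identify the drift operator $L_t$ and the role of a Liouville theorem in the linear analysis. But the mechanism by which you obtain smallness is different from the paper's, and as described it has a gap. You propose to send $t\to\infty$ so that $|\nabla u_t|$ becomes uniformly small. The paper instead first rescales the equation: it seeks a solution of $-\Delta u=(1+g(\lambda x))|\nabla u|^p$ for some $\lambda>0$ (which transforms back to a solution of (\ref{pert_ha}) by a power-law scaling), fixes $t=1$, and obtains smallness by taking $\lambda$ large. The point is that $g(\lambda\cdot)$ has the same $L^\infty$-norm as $g$ but, thanks to the decay hypothesis, is uniformly small outside any small cylinder, and near the boundary the weight $x_N^{\sigma+1}$ provides the smallness. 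This avoids any need for control of the linear inverse as $t\to\infty$. Your route requires the bound $\|L_t^{-1}\|$ to be uniform for large $t$, which the paper's linear theorem (``for all $t\ge 1$ there is some $C_t$\ldots'') does not give and which would need a separate argument (e.g.\ tracking $t$ through the blow-up analysis, where the limiting equation in Case (iii) contains the parameter $T=t/(y^\infty)^2$). This is a genuine missing step, not merely a presentational choice.

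Two further points of comparison. First, you attribute the restriction $p>\tfrac43$ to the task of controlling the quadratic remainder against the weight. In the paper the quadratic remainder is handled by the elementary inequality $0\le |x+y|^p-|x|^p-p|x|^{p-2}x\cdot y\le C|y|^p$, which gives a clean $R^p$ contribution with no extra constraint on $p$; the constraint $\tfrac43<p<2$ is used to guarantee $\mu+1-\alpha=(3p-4)/(2(p-1))\in(0,1)$, which is what makes the Liouville-type Propositions \ref{pp3} and \ref{pp4} work. Second, the paper does not estimate $L_t$ directly in the drift form: it substitutes $\psi=(x_N+t)^\mu\phi$ to convert $L_t$ into a Schr\"odinger operator $L^t\psi=-\Delta\psi+\mu(\mu-1)\psi/(x_N+t)^2$, builds the a~priori estimate by a continuation argument from the Laplacian (with an explicit one-dimensional supersolution to handle surjectivity on truncated domains), and uses weighted $C^1$-type sup norms rather than weighted H\"older norms. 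These are choices rather than gaps, but the change of variables is a key device you would want to discover before the linear analysis becomes tractable.
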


  \begin{remark} \begin{enumerate}  \item  The conditions on $g$ can surely be  weakened but our interest was mainly in not making any smallness assumptions on $ g$.

  \item  The condition on $p$ may seem somewhat arbitrary but we mention that  the restriction  $ \frac{4}{3}<p<2$ ensures that $  \mu + 1 - \alpha \in (0,1)$ $($see Section $\ref{norms and}$ $)$  which is needed for the proof of   Liouville-type theorems,  Propositions $\ref{pp3}$ and $\ref{pp4}$, that arose in the blow up analysis.
  \end{enumerate}
  \end{remark}

\subsection{Background}
A well studied problem is the existence versus non-existence of positive solutions of the Lane-Emden equation given by
\begin{eqnarray} \label{lane}
 \left\{ \begin{array}{lcl}
\hfill   -\Delta u    &=& u^p  \qquad \mbox{ in }  \Omega,  \\
\hfill u &=& 0 \hfill \mbox{ on }  \pOm,
\end{array}\right.
  \end{eqnarray} where $1<p$ and $ \Omega$ is a bounded domain in $ \IR^N$ (where $N \ge 3$) with smooth boundary.   In the subcritical case $ 1<p<\frac{N+2}{N-2}$  the problem is very well understood and $H_0^1(\Omega)$ solutions are classical solutions; see \cite{gidas}.  In the case of $ p \ge \frac{N+2}{N-2}$  there are no classical positive solutions in the case of the domain being star-shaped; see \cite{POHO}.  In the case of non star-shaped domains much less is known; see for instance \cite{Coron, M_1, M_2, M_3, Passaseo}.   In the case of $ 1<p<\frac{N}{N-2}$ ultra weak solutions (non $H_0^1$ solutions) can be shown to be classical solutions.  For $ \frac{N}{N-2}<p< \frac{N+2}{N-2}$  one cannot use elliptic regularity to show ultra weak solutions are classical.  In particular in \cite{MP} for a general bounded domain in $ \IR^N$  they construct singular ultra weak solutions with a prescribed singular set, see the book  \cite{Pacard} for more details on this.

We now consider
\begin{eqnarray} \label{lane_grad}
 \left\{ \begin{array}{lcl}
\hfill   -\Delta u    &=& |\nabla u|^p  \qquad \mbox{ in }  \Omega,  \\
\hfill u &=& 0 \hfill \mbox{ on }  \pOm,
\end{array}\right.
  \end{eqnarray} where $\Omega$ is a bounded domain in $ \IR^N$.
  The first point is that it is a non variational equation and hence there are various standard tools which are not available anymore.  The case $0<p<1$ has been studied in \cite{ACL}.  Some relevant monographs for this work include \cite{GT,GhRa2,STRUWE}.
Many people have studied boundary blow up  versions of (\ref{lane_grad}) where one removes the minus sign in front of the Laplacian;  see for instance  \cite{LaLi,ZZha}.
 See \cite{ACL, ABTP,ACTMOP,ACJT,ACJT2,BBM,BHV,BHV2, BHV3,ChC,FeMu,FePoRa,GPS,GPS2,GMP,GrTr,PS,Lions,MN,Ng,NV} for more results on equations similar to (\ref{lane_grad}). In particular, the interested reader is referred to \cite{Ng} for recent developments and a bibliography of significant earlier work, where the author studies isolated
singularities at $0$ of nonnegative solutions of the more general quasilinear equation
\[\Delta u=|x|^\alpha u^p+|x|^\beta |\nabla u|^q~~in~~\Omega\setminus\{0\},\]
where $\Omega\subset R^N$ ($N>2$) is a $C^2$ bounded domain containing the origin $0$, $\alpha>-2$, $\beta>-1$ and $p,q>1$, and provides a full classification of positive solutions  vanishing on $\partial \Omega$ and the removability of isolated singularities.\\
Let us finally mention that for   the whole space case, it was proved in \cite{Lions} that any classical solution of (\ref{lane_grad}) when  $\Omega=\IR^N$ with $p > 1$ has to be constant. Also, for the half-space problem (\ref{non_pert}) in the superquadratic case $p>2$, it was proved in \cite{souplet_half}  a Liouville-type classification, or symmetry result, which asserts that any solution $u\in C^2(\IR^N_+)\cap C(\overline{\IR^N_+)}$
has to be one-dimensional, where the result was obtained by using moving planes technique, combined with Bernstein type estimates  and a compactness argument. A similar result in the subquadratic case $p\in(1,2]$   was proved in \cite{PV}.
\\

Before outlining our approach we mention that our work is heavily inspired by the works
\cite{pert_ori,MP,Pacard,davila,davila_fast,pert_wei, small_hole}.  Many of these works consider variations of  $-\Delta u = u^p$ on the full space or an exterior domain.  Their approach is to find an approximate solution and then to linearize around the approximate solution to find a true solution.   This generally involves a very detailed linear analysis of the linearized operator associated with approximate solution and then one applies a fixed point argument to find a true solution.

This current work continues the  theme   of examining $ -\Delta u = | \nabla u|^p$ (or variations) for singular or classical solutions, see \cite{Cowanr1, Cowanr, ACL2018, Calc_var, JDE_2}.

We also mention the recent work \cite{souplet_half} where they examine various results, some of which are Liouville theorems related to (\ref{non_pert}).

  \subsection{Outline of approach}

  First we note that by a scaling argument, instead of finding a nonzero  solution
 of  (\ref{pert_ha}), it is sufficient to find a nonzero solution of
 \begin{equation} \label{pert_ha_1}
 \left\{ \begin{array}{lcl}
\hfill  -\Delta u(x)  &=& (1+g(\lambda x)) | \nabla u(x)|^p\qquad \mbox{ in } \IR^N_+,   \\
\hfill  u &=& 0 \hfill \mbox{ on }   \partial \IR^N_+,
\end{array}\right.
  \end{equation} for some $ \lambda>0$.   We will look for a solution of (\ref{pert_ha_1}) of the form $ u(x)= u_t(x) + \phi(x)$ (where $ t=1$;  but we leave $ t>0$ arbitrary for now) where $ \phi$ is unkown.
   Then $ \phi$ must satisfy
  \begin{equation} \label{outline_eq}
 \left\{ \begin{array}{lcl}
\hfill  \widetilde{L_t}(\phi)  &=& g(\lambda x) | \nabla u_t + \nabla \phi|^p
+ | \nabla u_t + \nabla \phi|^p \\&&- | \nabla u_t|^p -p | \nabla u_t |^{p-2} \nabla u_t \cdot \nabla \phi\qquad \mbox{ in } \IR^N_+,   \\
\hfill  \phi &=& 0 \hfill \mbox{ on }   \partial \IR^N_+,
\end{array}\right.
  \end{equation} where the arguments for all the functions are $x$ except for $g$ and where a computation shows that
\[ \widetilde{L_t}(\phi):=-\Delta \phi - p | \nabla u_t|^{p-2} \nabla u_t \cdot \nabla \phi = -\Delta \phi - \frac{p \phi_{x_N}}{ (p-1) x_N +t}.\]  We will develop a linear theory for the mapping $L_t$, a rescaled version of $ \widetilde{L_t}$.  We will show for all $ t>0$ there is some $C_t>0$ such  that for all $ f \in Y$ there is some $ \phi \in X$ (see Section \ref{norms and} for the definition $X$ and $Y$) which satisfies $L_t(\phi)=f$ in $ \IR^N_+$ with $ \phi=0$ on $ \partial \IR^N_+$.  Moreover one has $ \| \phi\|_X \le C_t \|f\|_Y$.   Using this we will find a solution of (\ref{outline_eq}) using a fixed point argument.  Toward this we define a nonlinear mapping on  $B_R$ (the closed ball of radius $R$ centered at the origin in $X$) by $J_\lambda(\phi)=J_{\lambda,t}(\phi)=\psi$, where
 \begin{equation} \label{nonlinear_map}
 \left\{ \begin{array}{lcl}
\hfill  \widetilde{L_t}(\psi)  &=& g(\lambda x) | \nabla u_t + \nabla \phi|^p  + | \nabla u_t + \nabla \phi|^p \\&& - | \nabla u_t|^p -p | \nabla u_t |^{p-2} \nabla u_t \cdot \nabla \phi\qquad \mbox{ in } \IR^N_+,   \\
\hfill  \psi &=& 0 \hfill \mbox{ on }   \partial \IR^N_+.
\end{array}\right.
  \end{equation}

\section{The linear theory}

We begin by collecting the various parameters and function spaces for the reader's convenience.  \\

\subsubsection{The parameters, spaces and linear operators.}\label{norms and}   Let $p \in (\frac{4}{3},2)$, $ \alpha = \frac{1}{p-1} >1$, $ \gamma= \frac{p}{p-1}>1$, $\mu = \frac{\gamma}{2}$ (note this implies that $ \mu + 1 - \alpha \in (0,1)$) and $ \sigma>0$ small (chosen small enough so that our solution in the end is a classical solution after applying elliptic regularity). We introduce the norms
\[ \| \phi \|_X:=\sup_{0<x_N\le 1} |x_N|^\sigma | \nabla \phi(x)| + \sup_{x_N \ge 1} |x_N|^\alpha | \nabla \phi(x)|,\]
\[ \|f\|_Y:=\sup_{0<x_N \le 1} |x_N|^{\sigma+1} | f(x)| + \sup_{x_N \ge 1} |x_N|^{\alpha+1} |f(x)|,\] where for $\phi \in X$ we require $\phi=0$ on $ \partial \IR^N_+$.   The linear operator we deal with is, for $ t \ge 1$,
\[ L_t(\phi)=\Delta \phi + \frac{\gamma \phi_{x_N}}{x_N+t},\] and note that
\[ \widetilde{L_t}(\phi)= - L_\frac{t}{p-1}(\phi).\]

After considering the operator $L_t$ it is natural to consider a slight modification of the space $X$ (call it $ \widehat{X}$) whose norm is given by
\begin{eqnarray*}
\| \phi \|_{\widehat{X}}&:=&\sup_{0<x_N\le 1} \left\{ |x_N|^\sigma | \nabla \phi(x)| + |x_N|^{\sigma+1} | \Delta \phi(x)| \right\} \\
&&+ \sup_{x_N \ge 1} \left\{  |x_N|^\alpha | \nabla \phi(x)| + |x_N|^{\alpha+1} | \Delta \phi(x)| \right\},
\end{eqnarray*} so we are defining $\widehat{X}:=\{ \phi : \| \phi\|_{\widehat{X}}<\infty \mbox{ and }   \phi =0 \mbox{ on } \partial \IR^N_+\}$.

We will use a change of variables  $ \psi(x)=(x_N+t)^\mu \phi(x)$ and set $L^t$ by
\[ L^t(\psi):=-\Delta \psi+ \frac{\mu(\mu-1) \psi}{(x_N+t)^2}.\] Then $ L_t(\phi)=f$ in $ \IR^N_+$  if $L^t(\psi)= (x_N+t)^\mu f(x)$ in $ \IR^N_+$.  The natural function spaces for $\psi$ are endowed with the norms
 \[ \| \psi \|_{X_\psi}:= \sup_{0<x_N<1} |x_N|^{\sigma-1} | \psi(x)| + \sup_{x_N>1} |x_N|^{\alpha-1-\mu} | \psi(x)|,\] where as before we take $ \psi=0$ on $ \partial \IR^N_+$,  the $Y_\psi$ norm is given by
\[ \|h\|_{Y_\psi}:=\sup_{0<x_N<1} |x_N|^{\sigma+1} |h(x)|+ \sup_{x_N>1} |x_N|^{\alpha+1-\mu}|h(x)|.\]  Again it is natural to consider the modified $ X_\psi$ norm given by
\begin{eqnarray*}
 \| \psi \|_{\widehat{X_\psi}}&:=& \sup_{0<x_N<1} \left\{|x_N|^{\sigma-1} | \psi(x)|+|x_N|^\sigma | \nabla \psi(x)| + |x_N|^{\sigma+1} | \Delta \psi(x)| \right\} \\
 &&+ \sup_{x_N>1} \left\{  |x_N|^{\alpha-1-\mu} | \psi(x)| + |x_N|^{\alpha-\mu} | \nabla \psi(x)| + |x_N|^{\alpha+1-\mu} | \Delta \psi(x)| \right\},
 \end{eqnarray*}  where we are imposing $ \psi=0$ on $ \partial \IR^N_+$.

\subsection{The linear theory}

We need to consider the following equation
\begin{equation} \label{linear_main}
 \left\{ \begin{array}{lcl}
\hfill L_t( \phi) &=&  f(x) \quad \mbox{ in } \IR^N_+,   \\
\hfill  \phi &=& 0 \hfill \mbox{ on  } \partial \IR^N_+. \\
\end{array}\right.
  \end{equation}

\begin{thm}\label{main_lin_theor} For all $ t  \ge 1$ there is some $ C=C_t$ such that for all $ f \in Y$ there is some $ \phi \in X$ which satisfies $(\ref{linear_main})$ and $ \| \phi \|_X \le C \|f\|_Y$.
\end{thm}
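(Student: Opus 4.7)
The plan is to exploit the change of variables $\psi(x) = (x_N+t)^\mu \phi(x)$ announced in Section \ref{norms and}: it converts $L_t(\phi)=f$ into $L^t(\psi) = h$ where $h(x) := (x_N+t)^\mu f(x)$, and a direct check using the definitions of $Y$ and $Y_\psi$ shows that $f\mapsto h$ is a bounded bijection between these spaces with constants depending only on $t$. In the range $p\in(\tfrac{4}{3},2)$ one has $\mu=\gamma/2\in(1,2)$, so the zeroth-order coefficient $\mu(\mu-1)/(x_N+t)^2$ of $L^t$ is strictly positive and $L^t$ satisfies the comparison principle. It therefore suffices to produce $\psi\in X_\psi$ solving $L^t(\psi)=h$ in $\IR^N_+$ with $\psi=0$ on $\partial \IR^N_+$ and $\|\psi\|_{X_\psi}\le C_t\|h\|_{Y_\psi}$; the estimate then transfers back to the $X$ bound on $\phi$ via $|\nabla\phi|\lesssim (x_N+t)^{-\mu}(|\psi|/(x_N+t)+|\nabla\psi|)$.

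The first and main step is the \emph{a priori estimate}: any classical solution $\psi$ of $L^t(\psi)=h$ that vanishes on $\partial\IR^N_+$ and lies in $X_\psi$ obeys $\|\psi\|_{X_\psi}\le C_t\|h\|_{Y_\psi}$. I would argue by contradiction, taking sequences $\psi_n, h_n$ with $\|\psi_n\|_{X_\psi}=1$ and $\|h_n\|_{Y_\psi}\to 0$, and selecting points $x^n\in\IR^N_+$ at which one of the two suprema defining $\|\psi_n\|_{X_\psi}$ is essentially attained. Depending on whether $x_N^n\to 0$, stays bounded, or tends to $+\infty$, and on which of the two regions $\{x_N\le 1\}$ or $\{x_N\ge 1\}$ contributes, I would perform a scaling centered at $x^n$ with rescaling factor matched to the weight of that region (so that the rescaled norm of $\psi_n$ is normalized to $1$ near the origin). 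In each case a local compactness argument yields, after passing to a subsequence, a nontrivial limit $\Psi$ defined either on $\IR^N_+$ or on $\IR^N$, solving $-\Delta\Psi=0$ or $-\Delta\Psi+\mu(\mu-1)x_N^{-2}\Psi=0$ (the only terms that survive in the limit, because $h_n\to 0$ in $Y_\psi$ and the singular denominator $(x_N+t)^2$ rescales accordingly), and growing at a precisely controlled rate at infinity and at the boundary. The Liouville-type statements Propositions \ref{pp3} and \ref{pp4} — which are exactly the ones requiring $\mu+1-\alpha\in(0,1)$ — then force $\Psi\equiv 0$, contradicting the normalization.

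With the a priori estimate in hand, existence is obtained by exhaustion. For $R\ge 1$ let $\Omega_R = B_R\cap\IR^N_+$ and solve
\begin{equation*}
L^t(\psi_R)=h \ \text{ in } \Omega_R, \qquad \psi_R=0 \ \text{ on } \partial\Omega_R,
\end{equation*}
which is uniquely solvable by Lax-Milgram (the zeroth-order coefficient is nonnegative) and smooth by classical regularity. Extending $\psi_R$ by zero and applying the a priori estimate (which is local in nature in the blow-up argument, hence applies uniformly in $R$) yields $\|\psi_R\|_{X_\psi}\le C_t\|h\|_{Y_\psi}$ independently of $R$. Interior and flat-boundary Schauder estimates then give $C^{2,\alpha}_{\text{loc}}(\overline{\IR^N_+})$ precompactness, and a diagonal extraction along $R_n\to\infty$ produces a limit $\psi\in X_\psi$ satisfying $L^t(\psi)=h$ in $\IR^N_+$ with $\psi=0$ on $\partial\IR^N_+$ and the claimed bound. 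Setting $\phi=(x_N+t)^{-\mu}\psi$ completes the proof.

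The serious obstacle is unambiguously the a priori estimate. The delicate point is that $X_\psi$ involves two different weights across $x_N=1$, so the blow-up analysis splits into several cases (boundary blow-up, interior blow-up at a bounded height, and blow-up at infinity within each weight region), each of which leads to a distinct limit problem that must be matched to the corresponding Liouville statement. Threading the rescalings so that the limit equation is always one of the handful of model problems covered by Propositions \ref{pp3}-\ref{pp4}, and verifying that the growth of $\Psi$ falls in the range where those theorems apply, is where the whole argument sits.
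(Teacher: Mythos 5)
Your proposal shares its technical core with the paper's argument — the change of variables to $\psi$, the a priori estimate by a contradiction/blow-up argument, and the appeal to the Liouville results (Propositions \ref{pp3}, \ref{pp4}) — but it replaces the paper's method of continuity with a direct exhaustion, and that is where a real gap appears. The paper first proves that $\Delta:\widehat{X_\psi}\to Y_\psi$ is a homeomorphism (surjectivity via a one-dimensional supersolution and comparison on truncated boxes $B_R\times(\E,R)$, injectivity via a Liouville-type argument), and then runs the continuity method along $L^t_\tau=-\Delta+\tau\mu(\mu-1)(x_N+t)^{-2}$ using the $\tau$-uniform blow-up estimate. This way, existence is never an issue once the a priori estimate is secured, because it is inherited from the Laplacian anchor.

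In your exhaustion step, the assertion that ``extending $\psi_R$ by zero and applying the a priori estimate (which is local in nature in the blow-up argument, hence applies uniformly in $R$) yields $\|\psi_R\|_{X_\psi}\le C_t\|h\|_{Y_\psi}$ independently of $R$'' is not justified. The a priori estimate you prove applies to solutions of $L^t(\psi)=h$ \emph{on all of $\IR^N_+$} that lie in $X_\psi$; the zero extension of $\psi_R$ is not such a solution, so the estimate simply does not apply to it. To close this you would have to either rerun the blow-up argument against the sequence of finite domains — in which case you must also treat blow-up points escaping toward $\partial B_{R_n}$, a genuine extra case not covered by Propositions \ref{pp3} or \ref{pp4} — or construct a supersolution on $\IR^N_+$ in $X_\psi$ dominating $|h|$ and use comparison to bound $|\psi_R|$ uniformly. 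The latter is exactly what the paper does (for the Laplacian anchor), and since the zero-order coefficient of $L^t$ is nonnegative the same barrier works for $L^t$ directly; but this is a step that must actually be carried out, not asserted as a remark. There are two further, smaller issues worth flagging: (i) with $\sigma>0$ the datum $h$ scales like $x_N^{-\sigma-1}$ near the boundary, so $h\notin L^2(B_R\cap\IR^N_+)$ and Lax--Milgram on $\Omega_R$ does not directly apply — the paper deliberately truncates in $x_N$ as well, working on $B_R\times(\E,R)$; and (ii) to transfer the $\psi$ bound to $\|\phi\|_X$ you need control of $|\nabla\psi|$, which $X_\psi$ alone does not provide — the paper upgrades to $\widehat{X_\psi}$ by a scaling/local-regularity argument, and your proof would need the same step.
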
 Instead of working directly with $ \phi$ we prefer to use a change of variables.  If we set  $ \psi(x)=(x_N+t)^\mu \phi(x)$ and set $L^t$ by
\[ L^t(\psi):=-\Delta \psi+ \frac{\mu(\mu-1) \psi}{(x_N+t)^2},\] then it is sufficient to develop a theory for
\begin{equation} \label{linear_main_psi}
 \left\{ \begin{array}{lcl}
\hfill L^t( \psi) &=&  h(x) \quad \mbox{ in } \IR^N_+,   \\
\hfill  \psi &=& 0 \hfill \mbox{ on  } \partial \IR^N_+. \\
\end{array}\right.
  \end{equation} A computation shows that if $\psi$ satisfies (\ref{linear_main_psi}) with $ h(x)=h_f(x)=-(x_N+t)^\mu f(x)$  then $ \phi$ satisfies (\ref{linear_main}).    The result relating the two problems is given by

\begin{prop} Suppose there is some $ C>0$ such that for all $ h \in Y_\psi$ there is some $ \psi \in \widehat{X_\psi}$ that solves $(\ref{linear_main_psi})$ and
 $ \| \psi \|_{\widehat{X_\psi}} \le C \|h \|_{Y_\psi}$.  If we set
$ \phi:= (x_N+t)^{-\mu} \psi$ and put $ h(x)=h_f(x)= -(x_N+t)^\mu f(x)$,  where $ f \in Y$ with $ \|f\|_{Y}=1$,  then $ \phi$ satisfies $(\ref{linear_main})$ and
$ \| \phi \|_{X} \le C_t$.
\end{prop}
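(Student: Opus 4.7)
The plan is to obtain the proposition as a direct change-of-variables consequence of the hypothesized linear theory for $L^t$, with no conceptual obstacle beyond a careful book-keeping check that the weights appearing in $X$, $Y$ are compatible with those in $X_\psi$, $Y_\psi$ after multiplication by $(x_N+t)^{\pm\mu}$, together with the algebraic fact that $L^t$ is conjugate to $L_t$ via this weight (which is precisely why $\mu=\gamma/2$ was chosen in Section \ref{norms and}).

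First I would verify that $\|h_f\|_{Y_\psi}\le 2(1+t)^\mu\|f\|_Y$. On $0<x_N\le 1$ one has $(x_N+t)^\mu\le(1+t)^\mu$, which converts the $|x_N|^{\sigma+1}$-weighted supremum of $|f|$ into the same weighted supremum of $|h_f|$ at a cost of $(1+t)^\mu$. On $x_N\ge 1$, using $t\ge 1$, the elementary bound $x_N+t\le(1+t)x_N$ gives $(x_N+t)^\mu\le(1+t)^\mu x_N^\mu$, which provides exactly the factor of $x_N^\mu$ needed to pass from the $Y$-weight $|x_N|^{\alpha+1}$ to the $Y_\psi$-weight $|x_N|^{\alpha+1-\mu}$. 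Since $\|f\|_Y=1$, applying the assumed linear theory to $h:=h_f$ yields $\psi\in\widehat{X_\psi}$ with $\|\psi\|_{\widehat{X_\psi}}\le 2C(1+t)^\mu$.

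Next I would verify that $\phi:=(x_N+t)^{-\mu}\psi$ actually solves (\ref{linear_main}). A direct chain-rule computation gives
\[ \Delta\phi = (x_N+t)^{-\mu}\Delta\psi - 2\mu(x_N+t)^{-\mu-1}\psi_{x_N} + \mu(\mu+1)(x_N+t)^{-\mu-2}\psi, \]
while
\[ \frac{\gamma\phi_{x_N}}{x_N+t} = \gamma(x_N+t)^{-\mu-1}\psi_{x_N} - \gamma\mu(x_N+t)^{-\mu-2}\psi. \]
Since $\gamma=2\mu$ the two $\psi_{x_N}$ terms cancel, and the $\psi$ coefficient collapses to $\mu(\mu+1)-\gamma\mu = -\mu(\mu-1)$. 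Therefore $L_t(\phi) = -(x_N+t)^{-\mu}L^t(\psi) = -(x_N+t)^{-\mu}h_f = f$, and $\phi=0$ on $\partial\IR^N_+$ is inherited from $\psi$.

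Finally I would convert the $\widehat{X_\psi}$-bound on $\psi$ into an $X$-bound on $\phi$ by estimating
\[ |\nabla\phi(x)| \le \mu(x_N+t)^{-\mu-1}|\psi(x)| + (x_N+t)^{-\mu}|\nabla\psi(x)| \]
in each of the two regimes. On $0<x_N\le 1$ the factors $(x_N+t)^{-\mu-1}$ and $(x_N+t)^{-\mu}$ are bounded by $1$, and writing $|x_N|^\sigma = |x_N|\cdot|x_N|^{\sigma-1}$ absorbs the exponent mismatch in the $\psi$ term (here I use $x_N\le 1$). On $x_N\ge 1$ the ratios $(x_N/(x_N+t))^{\mu+1}$ and $(x_N/(x_N+t))^\mu$ are bounded by $1$, so that $|x_N|^\alpha(x_N+t)^{-\mu-1}|\psi|$ is controlled by $|x_N|^{\alpha-1-\mu}|\psi|$ and $|x_N|^\alpha(x_N+t)^{-\mu}|\nabla\psi|$ by $|x_N|^{\alpha-\mu}|\nabla\psi|$, both of which are quantities appearing in $\|\psi\|_{\widehat{X_\psi}}$. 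Summing the two regimes yields $\|\phi\|_X \lesssim \|\psi\|_{\widehat{X_\psi}} \le 2C(1+t)^\mu$, which is the stated constant $C_t$. The only non-routine point is matching the weight exponents across the two regimes, and this is designed to work by the definition of the $\widehat{X_\psi}$-norm.
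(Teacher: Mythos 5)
Your proposal is correct and takes essentially the same approach as the paper: reduce to the $L^t$-theory by the weight conjugation, check that $h_f\in Y_\psi$ with a $t$-dependent constant, verify by direct differentiation that $L_t(\phi)=-(x_N+t)^{-\mu}L^t(\psi)$ (this is precisely where $\gamma=2\mu$ kills the first-order cross term), and then estimate $\nabla\phi$ via the product rule in both weight regimes. The paper compresses the final step into "Since $\psi\in\widehat{X_\psi}$ one easily sees that $\phi\in X$," whereas you spell out the exponent bookkeeping; the content is identical.
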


\begin{proof} Let $ f \in Y$ with $ \|f\|_Y=1$ and set $ h(x)=h_f(x)=-(x_N+t)^\mu f(x)$.  Then there is some $ C_t$ such that $ \|h\|_{Y_{\psi}} \le C_t$ and hence there is some $ C_{1,t}$ and $ \psi \in \widehat{X_\psi}$ which solves (\ref{linear_main_psi}) and $ \| \psi \|_{\widehat{X_\psi}} \le C_{1,t}$.
A direct computation shows that $ \phi$ satisfies the needed equation. Also   note that
\[ \nabla \phi(x)= \frac{ \nabla \psi(x)}{(x_N+t)^\mu} - \frac{ \mu e_N \psi(x)}{(x_N+t)^{\mu+1}},\] where $e_N$ is the $N^{th}$ coordinate vector.  Since $ \psi \in \widehat{X_\psi}$ one easily sees that $ \phi \in X$ and there is some $C_1$ depending only on $t,p,N$ such that $ \| \phi \|_X \le C_1 \|\psi \|_{\widehat{X_\psi}}$.  This gives the desired result.
\end{proof}

To prove the needed linear theory for $L^t$ we will use a continuation argument and to start the process we will need some results for  Laplacian.

\begin{prop} \label{lap_hom} Assuming the earlier assumptions on the  parameters we have $ \Delta:\widehat{X_\psi} \rightarrow Y_\psi$ is a homomorphism.
\end{prop}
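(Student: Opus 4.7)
Boundedness of $\Delta:\widehat{X_\psi}\to Y_\psi$ is immediate from the definition of the two norms, since the $Y_\psi$-weighted Laplacian terms appear as the last summands in both the near-boundary and far-field pieces of $\|\cdot\|_{\widehat{X_\psi}}$. The content of the proposition is therefore that $\Delta$ is bijective and has a bounded inverse. My plan is to establish injectivity and surjectivity-with-estimate separately.

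For injectivity, suppose $\psi\in\widehat{X_\psi}$ satisfies $\Delta\psi=0$ in $\IR^N_+$ with $\psi=0$ on $\partial\IR^N_+$. The definition of $\|\cdot\|_{\widehat{X_\psi}}$ gives $|\psi(x)|\lesssim x_N^{1-\sigma}$ for $x_N\le 1$ and $|\psi(x)|\lesssim x_N^{\mu+1-\alpha}$ for $x_N\ge 1$. Since $\sigma$ is small and $\mu+1-\alpha\in(0,1)$ (which is exactly what the restriction $p>\tfrac43$ buys us), $\psi$ is strictly sublinear. Extending $\psi$ oddly across $\partial\IR^N_+$ produces a harmonic function on all of $\IR^N$ of sublinear growth, hence constant by the classical Liouville theorem, so $\psi\equiv 0$.

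For existence together with the estimate, I would define
\[
\psi(x)=-\int_{\IR^N_+}G(x,y)\,h(y)\,dy,\qquad G(x,y)=c_N\Bigl(\tfrac{1}{|x-y|^{N-2}}-\tfrac{1}{|x-\bar y|^{N-2}}\Bigr),
\]
where $\bar y$ is the reflection of $y$ across $\partial\IR^N_+$; equivalently, $\psi$ is minus the Newton potential of the odd extension of $h$. The odd symmetry automatically yields $\psi=0$ on $\partial\IR^N_+$, and local elliptic regularity gives $\Delta\psi=h$ in the classical sense (assuming enough H\"older continuity of $h$, which is the case in the subsequent applications). To control the three weighted seminorms defining $\widehat{X_\psi}$ I would perform a dyadic decomposition $h=\sum_k h_k$ with $h_k$ supported in the slab $\{2^k\le y_N\le 2^{k+1}\}$, bound $\|h_k\|_\infty$ on each slab using the two regimes of the $Y_\psi$ norm, rescale each piece to a unit-size problem where classical Calder\'on--Zygmund and Schauder estimates apply, and sum the resulting geometric series.

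The main obstacle is verifying that this dyadic sum converges in each component of the $\widehat{X_\psi}$ norm. The two-scale nature of the weights (with $\sigma$ governing the boundary behaviour and $\alpha-\mu$ the far field) forces one to exploit the Dirichlet cancellation $G(x,y)\lesssim x_Ny_N/|x-y|^N$ so that contributions from source shells far from $x$ are damped; the condition $\mu+1-\alpha\in(0,1)$ is precisely what makes the resulting far-field geometric series absolutely convergent, while $\sigma$ must be chosen small enough to keep the mild boundary singularity integrable. Differentiating $G$ once and twice and repeating the book-keeping then delivers the $\nabla\psi$ and $\Delta\psi$ bounds, but tracking the ``cross'' contributions, where the source shell lies in one regime and the observation point $x$ in the other, is where the genuine work sits.
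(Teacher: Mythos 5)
Your injectivity argument is correct and in fact a bit cleaner than the paper's. You reflect $\psi$ oddly to all of $\IR^N$ (valid by the Schwarz reflection principle, since $|\psi|\lesssim x_N^{1-\sigma}$ forces continuous vanishing on $\partial\IR^N_+$), observe the growth is $O((1+|x|)^{\mu+1-\alpha})$ with exponent in $(0,1)$, and invoke the sublinear Liouville theorem. The paper instead first differentiates tangentially via difference quotients $\psi^h$, reflects those, uses the bounded Liouville theorem to conclude $\psi=\psi(x_N)$, and then kills the one-dimensional profile. Both routes exploit the same two ingredients (odd reflection plus the fact that $\mu+1-\alpha\in(0,1)$); yours reaches the conclusion in one step.

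For surjectivity your strategy is genuinely different from the paper's, and, as you yourself flag, it is not carried out. You propose representing $\psi$ as a half-space Green potential and then controlling the weighted norms by a dyadic slab decomposition, rescaled Calder\'on--Zygmund/Schauder estimates, and summation of geometric series, with the ``cross'' contributions identified as the open issue. That is a plausible programme but the hard estimates are exactly the ones left undone, so this is an acknowledged gap, not a proof. Two smaller points: the formula you write for $G$ requires $N\ge3$ (for $N=2$ one needs the logarithmic kernel), and you do not actually need to ``differentiate $G$ twice'' to get the $\Delta\psi$ component of $\|\psi\|_{\widehat{X_\psi}}$ --- that term equals $\|h\|_{Y_\psi}$ by construction; only $\psi$ and $\nabla\psi$ require work, and once the pointwise bound on $\psi$ is in hand the gradient bound follows from a standard interior scaling estimate, which is what the paper does. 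The paper sidesteps Green's functions entirely: it builds a one-dimensional supersolution $\overline\psi(x)=\tilde H_0(x_N)$ by explicitly integrating $-\tilde H''=\tilde h_0$ for a model right-hand side $\tilde h_0$ that dominates the $Y_\psi$ weight in both regimes, solves the Dirichlet problem on exhausting boxes $Q_{R,\varepsilon}=B_R\times(\varepsilon,R)$, uses the comparison principle against $\pm\overline\psi$ to get uniform weighted $L^\infty$ bounds, passes to the limit by a diagonal/compactness argument, and then upgrades to $\widehat{X_\psi}$ control by the rescaled interior elliptic estimate. The two approaches buy different things: yours would give an explicit representation formula, while the paper's barrier argument converts the whole two-regime bookkeeping problem into a single ODE computation and is considerably shorter; if you want to complete your route, the Dirichlet cancellation $G(x,y)\lesssim x_Ny_N|x-y|^{-N}$ and the dyadic estimates in the two cross cases ($y_N\lesssim1\lesssim x_N$ and $x_N\lesssim1\lesssim y_N$) must be written out and shown to sum, which is precisely the work you have deferred.
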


\begin{proof}  \textbf{Into.} Let $ \psi \in \widehat{X_\psi}$ with $ \Delta \psi=0$ in $ \IR^N_+$.    Note that for $ 0<x_N<1$ we have $ | \psi(x)| \le C x_N^{1-\sigma}$ and so $ \psi=0$ on $ \partial \IR^N_+$.  Let $ 1 \le i \le N-1$ and for any fixed $ h\in\IR\setminus \{0\}$ set
\[  \psi^h(x) = \frac{ \psi(x+ h e_i)- \psi(x)}{h},\] and note that $ \psi^h$ is also harmonic in $ \IR^N_+$.  Note also that there is some $ C_h$ such that $ | \psi^h(x)| \le C_h x_N^{1-\sigma}$ for $ 0<x_N<1$.   Also  for $x_N>1$ we have
\[ | \psi^h(x)| \le \int_0^1 | \nabla \psi(x+t h e_i)| dt \le C |x_N|^{\mu-\alpha},\] where $ C$ is independent of $h$ and also  note the exponent $\mu-\alpha$ is negative since $ p<2$. We can extend $ \psi^h$ oddly across $x_N=0$ to see that the extension is harmonic and bounded on $ \IR^N$ and hence is constant.  Taking into account the boundary condition of $ \psi^h$ we see $ \psi^h=0$ and hence $ \psi(x)=\psi(x_N)$ and recalling $ \psi$ is harmonic and the bound near $x_N=0$ we see that $ \psi(x_N)=Ax_N$.  Now recalling for $ x_N>1$ we have $ | \psi(x_N)| \le C x_N^{\mu+1-\alpha}$ and since this exponent is in $(0,1)$ we get $ \psi=0$.

\noindent
\textbf{Onto.} We will find a supersolution on finite domains and then pass to the limit.  To construct our supersolution we will first consider a one dimensional problem.   Firstly consider the one dimensional analogs of the $ X_\psi$ and $ Y_\psi$ norms (written $ X_\psi^1,  Y_\psi^1$) on $ (0,\infty)$.   For $ \tilde{h} \in Y_\psi^1$ we want to find an $\tilde{H} \in X_\psi^1$ which solves
\begin{equation} \label{one_dim}
- \tilde{H}''(x_N)= \tilde{h}(x_N) \quad \mbox{ for } x_N \in (0,\infty), \qquad \mbox{ with } \tilde{H}(0)=0.
\end{equation}
A direct computation shows that
\[ \tilde{H}(x_N)= \int_0^{x_N} \tau \tilde{h}(\tau) d\tau - x_N \int_{\infty}^{x_N} \tilde{h}(\tau) d\tau,\]  satisfies (\ref{one_dim}). Additionally one sees there is some $C$ such that $\|\tilde{H}\|_{X_\psi^1} \le C \|\tilde{h} \|_{Y_\psi^1}$.  Set
\[ \tilde{h}_0(x_N) = \frac{ \chi_{(0,2)}(x_N)}{ x_N^{\sigma+1}} + \frac{ \chi_{(1,\infty)}(x_N)}{ x_N^{\alpha+1-\mu}} \] and let $\tilde{H}_0$ denote the corresponding solution as defined above and set $ \overline{\psi}(x)=\tilde{H}_0(x_N)$; this will be our supersolution on a truncated domain. Now let $ h \in Y_\psi$ with $ \| h\|_{Y_\psi}=1$ and for $ R>1$ (big) and $ \E>0$ (small)
consider $ Q_{R,\E}:=B_R \times (\E,R) \subset \IR^{N-1} \times \IR$.   Let $ C$ be from the 1 dimensional problem.   Let $ \psi_{R,\E}$ denote a solution of
\[ -\Delta \psi_{R,\E}(x) = h(x) \qquad in~~Q_{R,\E} \qquad \psi_{R,\E}=0 \quad on~~\partial Q_{R,\E}.\]   Then by comparison principle we have
$ \overline{\psi}(x) \ge \psi_{R,\E}(x)$ in $Q_{R,\E}$ and one can argue similarly to get $ |\psi_{R,\E}(x)| \le \overline{\psi}(x)  $ in $Q_{R,\E}$.  Hence there is some $C_1>0$ such that for all $ R>1$ and $ 0<\E$ small (and independent of $ h$)  we have
\[ \sup_{0<x_N<1; x \in Q_{R,\E}} x_N^{\sigma-1} | \psi_{R,\E}(x)| + \sup_{x_N>1, x \in Q_{R,\E}} x_N^{\alpha-1-\mu} | \psi_{R,\E}(x)| \le C_1.\]  By taking $ \E= \frac{1}{R}$ and using a diagonal argument and compactness we see that we can pass to the limit to find some $ \psi$ such that $ -\Delta \psi(x)=h(x)$ in $ \IR^N_+$.  Also by fixing $x$ we can pass to the limit in the quantities in the norm and see that $ \psi \in X_\psi$ (hence $ \psi=0$ on $ \partial \IR^N_+$).   Additionally we have $ \| \psi \|_{X_\psi} \le C_1$.  A standard argument now gives the desired bound in $ \widehat{X_\psi}$; we will include the argument for the sake of the reader.  \\
For $ 0<x_N<1$ consider $ \tilde{\psi}(y):= x_N^{-1+\sigma} \psi(x+x_N y)$ for $ y \in B_\frac{1}{4}$.   Fix $ N<q<\infty$ and then by local regularity there is some $ C=C(q,N)$ such that
\begin{equation} \label{loc_reg}
\| \tilde{\psi} \|_{W^{2,q}(B_\frac{1}{8})} \le C \| \Delta \tilde{\psi} \|_{L^q(B_\frac{1}{4})} + C \| \tilde{\psi}\|_{L^q(B_\frac{1}{4})},
\end{equation} and note the bounds on $ h$ and $ \psi$ show that the norms on the right are bounded (independent of $ x$ in the allowable range).  One can now use the Sobolev imbedding to see that
\[ \sup_{B_\frac{1}{8}} | \nabla \tilde{\psi}| \le C_q \| \tilde{\psi} \|_{W^{2,q}(B_\frac{1}{8})}\] and hence we have the gradient bounded;  writing this out in terms of $ \psi$ gives the desired bound on the gradient of $ \psi$.   To get the second order bound we directly use the equation for $ \psi$.

A similar argument gives the desired estimate for $ x_N>1$. Combining these results gives the desired $ \widehat{X_\psi}$ bounds.
\end{proof}

\begin{thm} For all $ t \ge 1$ there is some $ C_t$ such that for all $ h \in Y_\psi$ there is some $ \psi \in \widehat{X_\psi}$ such that $(\ref{linear_main_psi})$ holds and $ \| \psi \|_{\widehat{X_\psi}} \le C_t \|h\|_{Y_\psi}$.
\end{thm}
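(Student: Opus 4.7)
The plan is to mirror the proof of Proposition \ref{lap_hom} almost verbatim. The key observation is that for $p \in (4/3,2)$ we have $\mu \in (1,2)$, so the zero-order coefficient $q(x) := \mu(\mu-1)/(x_N+t)^2$ of $L^t$ is nonnegative. This has two consequences: the classical maximum principle applies to $L^t$ on bounded subdomains, and the nonnegative 1D supersolution $\bar\psi(x) := \tilde H_0(x_N)$ already constructed in Proposition \ref{lap_hom} (which satisfies $-\Delta\bar\psi = \tilde h_0$ and $\bar\psi \ge 0$) remains a supersolution for $L^t$ without modification: for any $h \in Y_\psi$ with $\|h\|_{Y_\psi} = 1$ one has $|h(x)| \le \tilde h_0(x_N)$ by the very design of $\tilde h_0$, and hence
\[ L^t \bar\psi \;=\; -\Delta\bar\psi + q\,\bar\psi \;\ge\; \tilde h_0 \;\ge\; |h|. \]

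Given this supersolution, on each truncated cylinder $Q_{R,\E} = B_R \times (\E,R)$ I solve the Dirichlet problem $L^t \psi_{R,\E} = h$, $\psi_{R,\E} = 0$ on $\partial Q_{R,\E}$; existence and uniqueness follow from standard Fredholm theory because the positive zero-order term yields coercivity of $L^t$ on $H_0^1(Q_{R,\E})$. Applying the comparison principle to $L^t(\bar\psi \pm \psi_{R,\E}) \ge 0$ with nonnegative boundary data gives $|\psi_{R,\E}| \le \bar\psi$ on $Q_{R,\E}$ uniformly in $R$ and $\E$. Taking $\E = 1/R$ and combining interior elliptic compactness with a diagonal extraction, I pass to a limit $\psi$ solving $L^t\psi = h$ in $\IR^N_+$, vanishing on $\partial\IR^N_+$, with $|\psi(x)| \le \bar\psi(x)$ and hence $\|\psi\|_{X_\psi} \le C_t$.

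The upgrade from an $X_\psi$ bound to the full $\widehat{X_\psi}$ bound is obtained by exactly the local rescaling used at the end of Proposition \ref{lap_hom}. For $0 < x_N < 1$, I set $\tilde\psi(y) := x_N^{-1+\sigma}\psi(x + x_N y)$ on $B_{1/4}$; then $\tilde\psi$ satisfies an elliptic equation with bounded coefficients — crucially the rescaled zero-order coefficient $x_N^2\,q(x + x_N y)$ stays uniformly bounded because $t \ge 1$ prevents $1/(x_N+t)^2$ from blowing up — and with bounded $L^q$ right-hand side. Local $W^{2,q}$ regularity for some $q > N$ followed by the Sobolev embedding $W^{2,q} \hookrightarrow C^1$ delivers the gradient bound, and reading off $\Delta\psi$ directly from the equation gives the Hessian bound. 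The analogous scaling on $x_N \ge 1$, with $\alpha$ replacing $\sigma$, completes the estimate.

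The step I expect to require the most care is the passage to the limit, where I must verify that the diagonal extraction converges in a topology strong enough to preserve $L^t\psi = h$ pointwise and that the pointwise bound $|\psi_{R,\E}| \le \bar\psi$ survives in the limit. This is essentially routine using interior $C^{1,\beta}_{\mathrm{loc}}$ compactness, but it forces the $t$-dependence of the constant $C_t$ to be tracked consistently through each rescaling; no genuinely new difficulty beyond Proposition \ref{lap_hom} is encountered, precisely because the sign of $\mu(\mu-1)$ lets the supersolution from the pure Laplacian case be recycled.
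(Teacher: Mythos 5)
Your argument is correct, but it takes a genuinely different and considerably more elementary route than the paper. The paper proves this theorem by a continuation argument: it deforms $L^t$ to $-\Delta$ through the family $L^t_\tau = -\Delta + \tau\mu(\mu-1)(x_N+t)^{-2}$ and establishes the required uniform a priori bound by a contradiction/compactness argument that splits into three blow-up regimes according to where the $X_\psi$-norm of a normalized sequence concentrates; the two nontrivial regimes are then dispatched by the Liouville-type results, Propositions~\ref{pp3} and~\ref{pp4}. You instead observe that $p\in(\tfrac{4}{3},2)$ forces $\mu=\tfrac{p}{2(p-1)}\in(1,2)$, hence $\mu(\mu-1)>0$, so the zero-order coefficient of $L^t$ has the favorable sign. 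This single observation makes the comparison principle available for $L^t$ directly and lets the nonnegative one-dimensional barrier $\bar\psi=\tilde H_0$ built in the proof of Proposition~\ref{lap_hom} be reused verbatim, since $L^t\bar\psi=\tilde h_0+q\bar\psi\ge \tilde h_0\ge |h|$ whenever $\|h\|_{Y_\psi}\le 1$; the existence on truncated cylinders, the uniform bound $|\psi_{R,\E}|\le\bar\psi$, the diagonal extraction, and the local rescaling upgrade to the $\widehat{X_\psi}$-estimate then carry over unchanged, with $t\ge 1$ precisely what keeps the rescaled zero-order coefficient bounded. What your approach buys is a short, constructive proof that makes the structural reason for solvability explicit and avoids the blow-up/Liouville machinery entirely; what the paper's continuation approach buys is robustness (it would survive a sign change in the zero-order term or the absence of an explicit barrier), at the cost of having to prove and invoke Propositions~\ref{pp3} and~\ref{pp4}. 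Both are complete; yours is arguably the cleaner proof of this particular statement.
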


\begin{proof} Since $ \Delta: \widehat{X_\psi} \rightarrow Y$ is a homomorphism we can use a continuation argument to get the desired result.   So towards this we consider
\[ L^t_\tau(\psi):=-\Delta \psi + \frac{ \tau \mu (\mu-1) \psi}{(x_N+t)^2}.\]  Then note that $ (\tau, \psi) \mapsto L^t_\tau(\psi)$ is a continuous mapping from $ [0,1] \times \widehat{X_\psi}$ to $Y$.  So to get the desired result it is sufficient to get estimates on this mapping uniformly in $\tau$.   So we suppose the result is false and hence there are sequences $ \tau_m \in (0,1]$,  $ \psi_m \in \widehat{X_\psi}$ and $ h_m \in Y_\psi$ such that $ \| \psi_m\|_{\widehat{X_\psi}}=1$ and $ \|h_m \|_{Y_\psi} \rightarrow 0$ and $L^t_{\tau_m}(\psi_m)=h_m$ in $ \IR^N_+$.    We first assume that the zero order term in the norm of $ \psi_m$ is bounded away from zero; so after renormalizing we can assume that  $ \| \psi_m \|_{X_\psi}=1$ and we still have $ \|h_m \|_{Y_\psi} \rightarrow 0$.  For ease of notation now we will slightly switch notation; we will write $ (x,y) \in \IR^{N-1} \times (0,\infty)$ instead of $ x  \in \IR^N_+$.  \\

\noindent
We consider three cases: \\
\noindent
(i) there is $y^m \rightarrow 0 $ such that $ (y^m)^{\sigma-1} | \psi_m(x^m,y^m)| \ge \frac{1}{2}$,\vspace{0.05 in} \\
(ii) there is some $y^m \rightarrow \infty$ such that $ (y^m)^{\alpha-1-\mu} | \psi_m(x^m,y^m)| \ge \frac{1}{2}$, \vspace{0.05 in} \\
(iii) there is some $y^m$ bounded and bounded away from zero such that $ | \psi_m(x^m,y^m)|$ is bounded away from zero. \\
In all three cases we write $ \overline{x^m}=(x^m,y^m)$.  \\

\noindent
\textbf{Case (i).} Set $ \psi^m(z) = (y^m)^{\sigma-1} \psi_m( \overline{x^m}+ y^m z)$ for $ z_N>-1$.  Then $ | \psi^m(0)|$ is bounded away from zero and
\[ | \psi^m(z)| \le (1+z_N)^{1-\sigma} \qquad \mbox{ for } 0<y^m(1+z_N)<1,\] and a computation shows that
\[ -\Delta \psi^m(z) + \frac{\tau_m \mu (\mu-1) \psi^m(z)}{( z_N +1 + (y^m)^{-1} t)^2} = \widehat{h}_m(z) \quad \mbox{ in $z_N>-1$},\] with $ \psi^m=0$ on $z_N=-1$
where $ \widehat{h}^m(z)= (y^m)^{\sigma+1} h_m(\overline{x^m}+y^m z)$.  Note that
\[ |\widehat{h}^m(z)| \le \frac{ \|h_m\|_{Y_\psi}}{(1+z_N)^{\sigma+1}} \qquad 0< y^m(1+z_N)<1,\]
and hence $\widehat{h}_m \rightarrow 0$ uniformly away from $ z_N=-1$.  By a standard compactness and diagonal argument (and after passing to a subsequence) $ \psi^m \rightarrow \psi$  locally in $ C^{1.\delta}_{loc}(z_N>-1)$ and $ \psi$ satisifes $ \Delta \psi(z)=0$ in $ z_N>-1$, $ | \psi(0)| \neq 0$, $ | \psi(z)| \le (1+z_N)^{1-\sigma}$.  Using a similiar argument as in the proof of the previous proposition we see that we must have $ \psi=0$ which is a contradiction. \\

\noindent
\textbf{Case (ii).}  Set $ \psi^m(z) = (y^m)^{\alpha-1-\mu} \psi_m( \overline{x^m}+ y^m z)$ for $ z_N>-1$.  Then $ | \psi^m(0)|$ is bounded away from zero and
\[ | \psi^m(z)| \le (1+z_N)^{\mu+1-\alpha} \quad \mbox{ for } y^m(1+z_N)>1,\] and recall that $ \mu+1-\alpha \in (0,1)$.  One should note there is an estimate valid for $ z_N$ near $ -1$  but we won't need this.   A computation shows that

\[ -\Delta \psi^m(z) + \frac{\tau_m \mu (\mu-1) \psi^m(z)}{( z_N +1 + (y^m)^{-1} t)^2} = \widehat{h}_m(z) \quad \mbox{ in $z_N>-1$},\] with $ \psi^m=0$ on $z_N=-1$, where
 $ \widehat{h}_m(z)= (y^m)^{\alpha-\mu+1} h_m( \overline{x^m} + y^m z)$.  A computation shows that \[ | \widehat{h}_m(z)| \le \frac{\| h_m \|_{Y_\psi}}{(1+z_N)^{\alpha-\mu+1}}, \quad \mbox{ for } y^m(1+z_N)>1,\] and hence $ \widehat{h}_m \rightarrow 0$ uniformly away from $ z_N=-1$. Again by compactness and a diagonal argument we can assume $ \psi^m \rightarrow \psi$ in $ C^{1,\delta}_{loc}(z_N>-1)$ and $ \tau_m \rightarrow \tau \in [0,1]$ and $ \psi$ satisfies
\begin{equation} \label{needed_lio_1}
-\Delta \psi(z) + \frac{\tau \mu (\mu-1) \psi(z)}{( z_N +1 )^2} = 0 \quad \mbox{ in $z_N>-1$},
\end{equation} with $ | \psi(z)| \le (1+z_N)^{\mu+1-\alpha}$ for $ z_N>-1$ and hence $ \psi=0$ on $ z_N=-1$.  We can now apply Proposition \ref{pp4} to get the desired contradiction.
\\

\noindent
\textbf{Case (iii).}  Here we set $ \psi^m(z)=\psi_m( \overline{x^m}+ y^m z)$ for $ z_N>-1$.  Then $ |\psi^m(0)|$ is bounded away from zero and there is some $C$ (independent of $m$) such that
\begin{equation} \label{double_est}
| \psi^m(z)| \le C \chi_{(-1,1)}(z_N) (1+z_N)^{1-\sigma} + C \chi_{(0,\infty)}(z_N) (1+z_n)^{1+\mu-\alpha},
\end{equation}for $ z_N>-1$.  A computation shows that
\[ -\Delta \psi^m(z) + \frac{ \tau_m \mu(\mu-1) \psi^m(z)}{(1+ z_N + (y^m)^{-1} t)^2} = \widehat{h}_m(z) \quad \mbox {in }  z_N>-1,
\]  where $\widehat{h}_m(z)= (y^m)^2 h_m( \overline{x^m} + y^m z)$ and  $\widehat{h}_m \rightarrow 0$ uniformly away from $ z_N=-1$. Using compactness and a diagonal argument we have $ \psi^m \rightarrow \psi$ in $ C^{1,\delta}_{loc}(z_N>-1)$, hence $ \psi$ satisfies
\[ -\Delta \psi(z) + \frac{\tau \mu(\mu-1) \psi(z)}{(1+z_N + T)^2}=0 \quad \mbox{ in } z_N>-1\] with $ T= \frac{t}{(y^\infty)^2}$, where $ y^m \rightarrow y^\infty \in (0,\infty)$.  Note also that $ | \psi(0) | \neq 0$ and $ \psi$ also satisfies the pointwise bound for $ \psi^m$ given in (\ref{double_est}).  We can now apply Proposition \ref{pp3} to get the desired contradiction.  \\

We have proven the desired estimates on $ \| \psi_m \|_{X_\psi}$, i.e., $ \| \psi_m \|_{X_\psi} \rightarrow 0$.  To see that in fact $ \| \psi_m \|_{\widehat{X_\psi}} \rightarrow 0$ one can now use a standard scaling argument, see the end of the  proof of Proposition  \ref{lap_hom} for an idea of the needed scaling argument.
\end{proof}

\subsection{Liouville theorems}

In this section we prove the needed Liouville theorems that arose in the blow up analysis.

\begin{prop} \label{pp3} Let $ t>0$, $ \tau \in [0,1] $ and  $ \psi \in \widehat{X_\psi}$ be such

\begin{equation} \label{liovile_eq_1}
-\Delta \psi(x) + \frac{\tau \mu(\mu-1) \psi(x)}{(x_N + t)^2}=0 \mbox{ in } \IR^N_+.
\end{equation} Then $ \psi=0$.
\end{prop}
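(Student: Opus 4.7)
\emph{Plan of proof.} The strategy is to factor out an explicit positive solution of the same equation and reduce to a simpler elliptic equation to which the strong maximum principle applies after a tangential translation compactness argument.

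\textbf{Step 1: an explicit positive solution.} Since $p\in(\tfrac{4}{3},2)$, $\mu=\tfrac{p}{2(p-1)}\in(1,2)$, so $\mu(\mu-1)>0$. For $\tau\in[0,1]$ set
\[ \beta_+ := \frac{1}{2}+\sqrt{\frac{1}{4}+\tau\mu(\mu-1)}, \]
so that $\beta_+\ge 1$, with equality iff $\tau=0$. A direct calculation shows
\[ w(x_N):=(x_N+t)^{\beta_+} \]
is a strictly positive solution of $-\Delta w+\frac{\tau\mu(\mu-1)}{(x_N+t)^2}w=0$ in $\IR^N_+$.

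\textbf{Step 2: factor and simplify.} Set $\phi:=\psi/w$. Then $\phi=0$ on $\partial \IR^N_+$, and since $w$ solves the same homogeneous equation, one computes $\operatorname{div}(w^2\nabla\phi)=0$, which is equivalent to
\[ \Delta\phi + \frac{2\beta_+}{x_N+t}\,\phi_{x_N} = 0 \qquad\text{in }\IR^N_+. \]
This is a linear elliptic equation with smooth coefficients depending only on $x_N$, bounded on every compact subset of $\IR^N_+$.

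\textbf{Step 3: uniform decay of $\phi$.} The $\widehat{X_\psi}$ bound gives $|\psi(x)|\le C\,x_N^{1-\sigma}$ for $0<x_N<1$ and $|\psi(x)|\le C\,x_N^{\mu+1-\alpha}$ for $x_N>1$. Combining with $w(x_N)\ge t^{\beta_+}$ on $0<x_N<1$ and $w(x_N)\ge x_N^{\beta_+}$ on $x_N>1$, one obtains
\[ |\phi(x)|\le C\,t^{-\beta_+}\,x_N^{1-\sigma}\ (0<x_N<1), \qquad |\phi(x)|\le C\,x_N^{\mu+1-\alpha-\beta_+}\ (x_N>1), \]
uniformly in $x'\in\IR^{N-1}$. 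Since $1-\sigma>0$ and $\mu+1-\alpha-\beta_+<0$ (because $\mu+1-\alpha\in(0,1)$ and $\beta_+\ge 1$), we conclude $\phi(x)\to 0$ uniformly in $x'$ as $x_N\to 0^+$ and as $x_N\to\infty$. In particular $\phi$ is bounded on $\IR^N_+$.

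\textbf{Step 4: conclude by translation and strong maximum principle.} Let $M:=\sup_{\IR^N_+}\phi$, and suppose toward a contradiction that $M>0$. Choose a sequence $x^n=(x'_n,y^n)$ with $\phi(x^n)\to M$. The uniform decay of Step~3 forces $y^n$ to remain in a compact subset of $(0,\infty)$, so along a subsequence $y^n\to y^\ast\in(0,\infty)$. Since the coefficients of the equation depend only on $x_N$, the translated functions $\phi^n(x):=\phi(x'+x'_n,x_N)$ still solve the same equation on $\IR^N_+$, vanish on $\partial\IR^N_+$, and are uniformly bounded by $\sup|\phi|$. Interior Schauder estimates plus Arzel\`a--Ascoli yield, along a further subsequence, $\phi^n\to\phi^\ast$ locally in $C^{2}(\IR^N_+)\cap C(\overline{\IR^N_+})$. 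Then $\phi^\ast$ solves the same equation on $\IR^N_+$, vanishes on $\partial\IR^N_+$, satisfies $\phi^\ast\le M$, and $\phi^\ast(0,y^\ast)=M$. Since the interior maximum is attained, the strong maximum principle gives $\phi^\ast\equiv M$, contradicting $\phi^\ast(\cdot,0)\equiv 0$. Hence $M\le 0$; applying the argument to $-\phi$ gives $\inf\phi\ge 0$. Therefore $\phi\equiv 0$, so $\psi\equiv 0$.

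\textbf{Main obstacle.} The truly delicate point is the tangential noncompactness: the $\widehat{X_\psi}$ norm only encodes decay as $x_N\to 0$ and $x_N\to\infty$, with no decay in $x'$. The key structural observation that makes everything work is that the coefficients of the reduced equation depend only on $x_N$, so the equation is invariant under arbitrary tangential translations; this is what allows the translation compactness step to produce a legitimate limiting solution that attains its supremum at an interior point. The compatibility $\beta_+\ge 1>\mu+1-\alpha$, guaranteed by the range $p\in(\tfrac{4}{3},2)$, is what forces the quotient $\psi/w$ to decay at $x_N=\infty$.
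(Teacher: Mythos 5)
Your proof is correct, but it follows a genuinely different route from the paper, so a comparison is in order. The paper first applies tangential difference quotients $\psi^h$ and a translation-compactness plus maximum-principle argument to $\psi^h$ to conclude $\psi$ depends only on $x_N$; it then solves the resulting Euler-type ODE and uses the growth and boundary conditions to force $C_1=C_2=0$. You instead factor out the explicit solution $w(x_N)=(x_N+t)^{\beta_+}$ (the same $\beta_+$ that appears in the paper as the indicial root of the Euler ODE), which turns the equation for $\phi=\psi/w$ into $\Delta\phi+\tfrac{2\beta_+}{x_N+t}\phi_{x_N}=0$, with no zeroth-order term, and then run the translation-compactness and strong maximum principle argument directly on $\phi$. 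This buys two things at once: because $\beta_+\ge 1>\mu+1-\alpha$, the quotient $\phi$ decays to zero uniformly as $x_N\to 0$ and as $x_N\to\infty$, so a maximizing sequence is automatically trapped in a compact $x_N$-slab (whereas $\psi$ itself can grow like $x_N^{\mu+1-\alpha}$); and the absence of a zeroth-order term makes the strong maximum principle applicable to $\phi$ itself rather than to a tangential difference quotient. Your approach thus collapses the paper's two steps (reduce to one dimension, then analyze the ODE) into a single maximum-principle argument, at the cost of needing to know $w$ in advance. Both proofs ultimately rest on the same structural facts: the coefficients depend only on $x_N$, so the equation is invariant under tangential translations, and the restriction $p\in(\tfrac43,2)$ guarantees $\mu+1-\alpha\in(0,1)$, which is exactly what ensures the quotient decays at infinity.
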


\begin{proof} The case of $ \tau=0$ has already been handled since this is just the Laplacian. Again we will switch notation to $ x= \overline{x}= (x,y)$. For $ 1 \le i \le N-1$ and $ 0<|h| \le 1$ we consider
\[ \psi^h(x,y)=  \frac{ \psi( (x,y)+ h e_i)- \psi( x,y)}{h},\] and note that $ \psi^h$ satisfies the same equation as $ \psi$.  Also note that since $t>0$ the equation has no singularities in it at $y=0$ and hence $ \psi$ is in fact smooth up to the boundary.   Also there is some $C>0$ (independent of $h$) such that $ | \psi^h(x,y)| \le C y^{\mu-\alpha}$ for $ x_N>1$ and note $ \mu - \alpha<0$. Also there is some $C_h$ such that $ | \psi^h(x,y)| \le C_h y^{1-\sigma}$ for $ 0<y<1$ and again we have $ \psi^h$ is in fact smooth.   Using the above bounds we see that $ \psi^h$ is bounded and so if we assume its not identically zero we can then assume (after multiplying by $-1$ if needed) that
$\sup_{\IR^N_+} \psi^h=T \in (0,\infty)$.   If this is attained at some $ (x^0,y^0)$ (with $ y^0 \in (0,\infty)$) we get a contradiction via the maximum principle.  Hence there must be some $ (x^m,y^m)$ such that $ \psi^h(x^m,y^m) \rightarrow T$ and note that we must have $ y^m$ bounded and bounded away from zero after considering the pointwise bound.   For $ z_N>-1$ we set $ \zeta_m(z)= \psi^h((x^m,y^m)+ y^m z)$ and note $ \zeta_m(0) \rightarrow T$ and $ \zeta_m \le T$.  Also  note that
\[ | \zeta_m(z)| \le C (y^m)^{\mu-\alpha} (1+z_N)^{\mu-\alpha} \quad \mbox{ for } y^m (1+z_N)>1, \quad \mbox{ and } \]
\[  | \zeta_m(z)| \le C_h (y^m)^{1-\sigma} (1+z_N)^{1-\sigma} \quad \mbox{ for } 0<y^m (1+z_N)<1.\] By a compactness  and diagonal argument we see there is some $ \zeta$ such that $ \psi_m \rightarrow \zeta$ in $ C^{1,\delta}_{loc}(z_N>-1)$ and $\zeta$ satisfies
\[ -\Delta \zeta(z)+ \frac{\tau \mu (\mu-1) \zeta(z)}{(1+z_N + \frac{t}{y^\infty})^2} =0 \quad \mbox{ in }  z_N>-1,\] where $ y^m \rightarrow y^\infty \in (0,\infty)$ and $\zeta$ satisfies the same pointwise bounds as $ \zeta_m$ and hence $\zeta$ is nonconstant on $ z_N>-1$ and attains its maximum at the origin which contradicts the  maximum principle.  From this we see that $ \psi^h$ is zero and hence $ \psi(x) = \psi(x_N)$.     Returning to the equation for $ \psi$ we see it is now an ode of Euler type and hence has solutions of the form
\[ \psi(x_N)= C_1 (x_N+t)^{\beta_+(\tau)} + C_2 (x_N+t)^{\beta_-(\tau)},\] where
\[ \beta_\pm(\tau) = \frac{1}{2} \pm \frac{  \sqrt{1+4 \tau \mu^2-4 \tau \mu}}{2}.\] A compuation shows that $ \beta_+'(\tau)>0$ for $ \tau \in (0,1)$ and hence for $ \tau \in (0,1]$ one has $ \beta_+(\tau)> \beta_+(0)=1$.   Note that
\[ \alpha-1-\mu+ \beta_+(\tau) > \alpha-1-\mu+ \beta_+(0)= \alpha - \mu>0,\] and hence writing out $ \limsup_{x_N \rightarrow \infty} x_N^{\alpha -1 - \mu} | \psi(x_N)| \le C$  gives that $ C_1=0$.  To satisfy the boundary condition one sees they must have $C_2=0$ and hence $ \psi=0$.
\end{proof}

\begin{prop} \label{pp4} Suppose $ \tau \in [0,1]$ and $ \psi$ satisfies
\begin{equation}
\label{infinity_liouv}
-\Delta \psi(x) + \frac{ \tau \mu(\mu-1) \psi(x)}{x_N^2}=0 \quad \mbox{ in }  \IR^N_+,
\end{equation} with $ | \psi(x)| \le C x_N^{\mu+1-\alpha}$ for $ x \in \IR^N_+$.  Then $ \psi=0$.

\end{prop}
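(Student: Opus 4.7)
The plan is to majorise $\psi$ on arbitrarily large slabs $\IR^{N-1}\times(\varepsilon,R)$ by a one--parameter family of purely $x_N$--dependent supersolutions and then send the parameter to zero; the same argument applied to $-\psi$ gives the matching lower bound.

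First, I would record the Euler exponents $s_\pm(\tau)=\tfrac12\bigl(1\pm\sqrt{1+4\tau\mu(\mu-1)}\bigr)$, so that $y^{s_\pm(\tau)}$ are the two solutions of the one--dimensional equation $-\varphi''+\tau\mu(\mu-1)y^{-2}\varphi=0$. Since $\mu>1$ and $\tau\in[0,1]$, a direct check gives $s_-(\tau)\in[1-\mu,0]$ and $s_+(\tau)\in[1,\mu]$. Using the hypothesis $\mu+1-\alpha\in(0,1)$ (nonempty interval), I fix an auxiliary exponent $a\in(\mu+1-\alpha,\,1)$. Then $s_-(\tau)\le 0<a<1\le s_+(\tau)$, which is equivalent to $a(a-1)<\tau\mu(\mu-1)$.

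For $\eta>0$ I would take the barrier
\[
\bar\psi_\eta(x):=\eta\bigl(x_N^{\,s_-(\tau)}+x_N^{\,a}\bigr).
\]
Because $x_N^{s_-(\tau)}$ solves the ODE exactly and the $x_N^{a}$ piece leaves a strictly positive remainder, one computes
\[
-\Delta\bar\psi_\eta+\frac{\tau\mu(\mu-1)}{x_N^{2}}\bar\psi_\eta=\eta\bigl(\tau\mu(\mu-1)-a(a-1)\bigr)x_N^{\,a-2}>0\quad\text{in }\IR^N_+,
\]
so $w:=\bar\psi_\eta-\psi$ is a supersolution of the same operator, bounded on every slab $\Omega_{R,\varepsilon}:=\IR^{N-1}\times(\varepsilon,R)$. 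The hypothesis $|\psi|\le Cx_N^{\mu+1-\alpha}$, together with the inequalities $s_-(\tau)\le 0<\mu+1-\alpha<a$, makes it routine to check that $w\ge 0$ on $\{x_N=\varepsilon\}$ for every sufficiently small $\varepsilon$ (the $x_N^{s_-(\tau)}$ term dominates the obstruction near $x_N=0$) and that $w\ge 0$ on $\{x_N=R\}$ for every sufficiently large $R$ (the $x_N^a$ term dominates at infinity), both for any fixed $\eta>0$. Since the zero--order coefficient is nonnegative, the maximum principle on the slab delivers $w\ge 0$ throughout $\Omega_{R,\varepsilon}$. Fixing any $(x_0,y_0)\in\IR^N_+$ and choosing $\varepsilon<y_0<R$ as above therefore yields $\psi(x_0,y_0)\le\eta(y_0^{\,s_-(\tau)}+y_0^{\,a})$; letting $\eta\downarrow 0$ forces $\psi(x_0,y_0)\le 0$, and the same argument applied to $-\psi$ forces the opposite inequality, whence $\psi\equiv 0$.

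The main technical point is justifying the maximum principle on the $x'$--unbounded slab $\Omega_{R,\varepsilon}$ for a bounded supersolution of an operator with nonnegative zero--order term. The classical route is to use the translation invariance of the equation in $x'=(x_1,\ldots,x_{N-1})$: a putative negative infimum of $w$ would be realised along a sequence of $x'$--translates, whose shifts (after passing to a locally uniform limit via interior estimates) would produce a bounded supersolution attaining its negative minimum at an interior point, contradicting the strong maximum principle. For $\tau=0$ the operator is the Laplacian and one falls back on the standard Phragm\'en--Lindel\"of principle for bounded superharmonic functions on a slab.
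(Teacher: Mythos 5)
Your proof is correct, and it is genuinely different from the paper's. The paper's route is to differentiate the hypothesis twice: first derive an interior gradient estimate $|\nabla\psi(x)|\le C\,x_N^{\mu-\alpha}$ from the pointwise bound and the equation via rescaled Schauder estimates, then use tangential difference quotients $\psi^h$, a sliding/maximum-principle argument to show $\psi^h\equiv 0$, conclude that $\psi=\psi(x_N)$, and finally solve the Euler ODE explicitly, killing both $C_1x_N^{\beta_+(\tau)}$ and $C_2x_N^{\beta_-(\tau)}$ because $\mu+1-\alpha$ lies strictly between $\beta_-(\tau)$ and $\beta_+(\tau)$. You instead exploit that same strict betweenness directly to build a one-dimensional supersolution barrier $\eta\bigl(x_N^{s_-(\tau)}+x_N^a\bigr)$ with $a\in(\mu+1-\alpha,1)\subset(s_-(\tau),s_+(\tau))$, trap $\psi$ on slabs via a Phragm\'en--Lindel\"of argument, and send $\eta\to0$; this dispenses with the reduction to one dimension and with the implicit gradient bound, at the cost of justifying the maximum principle on slabs that are unbounded in the tangential directions. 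The verification points all check out: $\mu=p/(2(p-1))\in(1,2)$ for $p\in(\tfrac43,2)$ gives $\mu(\mu-1)>0$; $s_-(\tau)$ is nonincreasing from $0$ to $1-\mu$ and $s_+(\tau)$ nondecreasing from $1$ to $\mu$, so indeed $s_-(\tau)\le 0<a<1\le s_+(\tau)$ strictly, and hence $a(a-1)<\tau\mu(\mu-1)$; the $x_N^{s_-}$ term dominates $Cx_N^{\mu+1-\alpha}$ near $x_N=0$ (also when $\tau=0$, where it is the constant $\eta$) and the $x_N^a$ term dominates at infinity since $a>\mu+1-\alpha$. The Phragm\'en--Lindel\"of step you sketch via tangential translation and local compactness is sound, since the zero-order coefficient is nonnegative and uniformly bounded on the slab $\{\varepsilon\le x_N\le R\}$, and one can alternatively quote the classical slab Phragm\'en--Lindel\"of (via a barrier of the form $\phi(x_N)\prod_i\cosh(c_ix_i)$ built from a first eigenfunction on a slightly larger interval). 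For the writeup you might note explicitly that the minimizing sequence argument returns a \emph{constant} limit $w_\infty\equiv -m$ by the strong maximum principle, and that this is incompatible with the strict inequality $L^\tau w_\infty=L^\tau\bar\psi_\eta>0$ even when $\tau=0$.
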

\begin{proof} The case of $ \tau=0$ is handled in the proof of a previous result. As in the previous proof, for $ 1 \le i \le N-1$ and  $ 0<|h| \le 1$, we consider
\[ \psi^h(x,y)=  \frac{ \psi( (x,y)+ h e_i)- \psi(x,y)}{h},\] and note that $ \psi^h$ satisfies the same equation as $ \psi$.  Note this time the equation is singular on the boundary.

Also there is some $C>0$ (independent of $h$) such that $ | \psi^h(x,y)| \le C y^{\mu-\alpha}$ for all $y>0$  and note $ \mu - \alpha<0$. Also there is some $C_h$ such that $ | \psi^h(x,y)| \le C_h y^{\mu-\alpha+1}$ for all $y>0$ and this exponent is positive.     Combining the pointwise estimates we see there is some $ \E>0$ such that
\[ \sup_{(x,y) \in \IR^N_+} | \psi^h(x,y)| = \sup_{ (x,y) \in \IR^{N-1} \times (\E,\E^{-1})} | \psi^h(x,y)|.\]  We can argue exactly as in the previous case to see that $ \psi(x) = \psi(x_N)$ (we have switched notation back to just $ x \in \IR^N_+$).    So we have
\[ \psi(x_N)=C_1 x_N^{\beta_+(\tau)}+ C_2 x_N^{\beta_-(\tau)},\] where $\beta_\pm(\tau)$ is from the previous proof.   Provided we have both $ \beta_+(\tau),\beta_-(\tau)$ different from $ \mu+1-\alpha$  then by sending $ x_N \rightarrow 0,\infty$ we can see $C_1=C_2=0$. From the previous proof we know that $ \beta_+(\tau)>\mu+1-\alpha$ for $ \tau>0$. By using monotonicity in $ \tau$ one sees that $ \beta_-(\tau) < \mu+1-\alpha$ and this gives us the desired result.

\end{proof}

\section{The fixed point argument}

We now will fix $ t=1$.  The following lemma includes some fairly standard inequalities that are needed to prove the nonlinear mapping is a contraction.   Note there are no smallness assumptions on the $y$ and $z$ terms.  See, for instance,  \cite{JDE_2,PLLT} for a proof.

\begin{lemma}  Suppose $ 1<p \le 2$.  Then there is some $ C=C_p$ such that  for all vectors $x,y,z \in \IR^N$ one has
\begin{equation} \label{ineq_1}
0 \le |x+y|^p- |x|^p - p |x|^{p-2} x \cdot y \le C |y|^p,
\end{equation}
\begin{equation} \label{ineq_2}
\Big| |x+y|^p-p |x|^{p-2} x \cdot y - |x+z|^p + p|x|^{p-2} x \cdot z \Big| \le C \left( |y|^{p-1} + |z|^{p-1} \right) |y-z|.
\end{equation}

\begin{equation} \label{ineq_3}
\Big|  |x+y|^p-|x+z|^p \Big| \le C \left( |y|^{p-1} + |z|^{p-1} + |x|^{p-1} \right) |y-z|.
\end{equation}

\end{lemma}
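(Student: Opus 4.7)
My plan is to base everything on the fact that, for $1<p\le 2$, the vector field $w\mapsto |w|^{p-2}w$ is $(p-1)$-H\"older continuous on $\IR^N$ with a constant $C_p$ depending only on $p$:
\begin{equation} \label{plan_holder}
\bigl| |a|^{p-2}a - |b|^{p-2}b \bigr| \le C_p |a-b|^{p-1} \qquad \text{for all } a,b \in \IR^N.
\end{equation}
I would first dispatch this auxiliary estimate by an elementary dichotomy: in the regime $|b|\le 2|a-b|$ one has $|a|\le 3|a-b|$ as well, so both terms on the left are bounded individually by a constant times $|a-b|^{p-1}$ by the triangle inequality; in the complementary regime $|b|>2|a-b|$, the segment from $b$ to $a$ stays inside the annulus $\{|w|\ge |b|/2\}$, the derivative of $w\mapsto |w|^{p-2}w$ is bounded there by $C|b|^{p-2}$, and a standard mean value bound combined with $|b|^{p-2}\le 2^{2-p}|a-b|^{p-2}$ (valid since $2|a-b|<|b|$ and $p-2<0$) delivers the claim. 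With \eqref{plan_holder} in hand, all three inequalities reduce to integrating the gradient of $F(w):=|w|^p$, namely $\nabla F(w)=p|w|^{p-2}w$, along straight line segments.

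For \eqref{ineq_1}, the lower bound is simply the convexity of $F$. For the upper bound I would write
\[
|x+y|^p - |x|^p - p|x|^{p-2}x\cdot y = p\int_0^1 \bigl( |x+ty|^{p-2}(x+ty) - |x|^{p-2}x \bigr)\cdot y\, dt,
\]
apply \eqref{plan_holder} with $a=x+ty$, $b=x$ to bound the integrand by $C_p |ty|^{p-1}|y|$, and integrate to obtain $\tfrac{C_p}{p}|y|^p$. For \eqref{ineq_2}, set $H(w):=|x+w|^p - p|x|^{p-2}x\cdot w$, so that $\nabla H(w)=p(|x+w|^{p-2}(x+w)-|x|^{p-2}x)$, and write
\[
H(y)-H(z) = \int_0^1 \nabla H(z+t(y-z))\cdot(y-z)\,dt.
\]
Applying \eqref{plan_holder} to $a=x+z+t(y-z)$, $b=x$ produces an integrand of size $C_p|z+t(y-z)|^{p-1}|y-z|$, which I would bound using $(|z|+t|y-z|)^{p-1}\le |z|^{p-1}+|y-z|^{p-1}$ via subadditivity of $r\mapsto r^{p-1}$ on $[0,\infty)$ (valid since $0<p-1\le 1$), and then $|y-z|^{p-1}\le |y|^{p-1}+|z|^{p-1}$ by the triangle inequality and the same subadditivity. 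For \eqref{ineq_3} the same template applies directly to $F$: parametrise $t\mapsto x+z+t(y-z)$, differentiate, and bound $|x+z+t(y-z)|^{p-1}$ by $|x|^{p-1}+|y|^{p-1}+|z|^{p-1}$ via the subadditivity trick.

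The only real obstacle is the auxiliary estimate \eqref{plan_holder}: the Jacobian of $w\mapsto |w|^{p-2}w$ blows up at the origin (since $p-2<0$), so a single mean value argument fails and one is forced into the near-origin/far-origin dichotomy sketched above. Once \eqref{plan_holder} is in place, the remaining arguments are routine manipulations producing a constant $C=C_p$ depending only on $p$, and no cancellation between the three displayed inequalities is needed. This also matches the references \cite{JDE_2,PLLT} cited in the statement, where essentially this scheme is carried out in detail.
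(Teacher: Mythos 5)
Your proof is correct, and since the paper defers entirely to the references \cite{JDE_2,PLLT} rather than giving its own argument, the relevant comparison is with the standard approach there — which your scheme matches: establish the $(p-1)$-H\"older continuity of $w\mapsto|w|^{p-2}w$ (your estimate is exactly the classical Simon/Lindqvist inequality for $1<p\le 2$), then integrate $\nabla F$ along segments and use subadditivity of $r\mapsto r^{p-1}$. The dichotomy you use for the auxiliary estimate is sound: in the near-origin case both terms are controlled by the triangle inequality, and in the far-origin case the segment stays in the region where $\|D(|w|^{p-2}w)\|\le|w|^{p-2}$ (the Jacobian eigenvalues are $1$ and $p-1$, both in $(0,1]$), and the mean value bound together with $|b|^{p-2}\le 2^{p-2}|a-b|^{p-2}$ closes the estimate. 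Minor bookkeeping aside (your written $2^{2-p}$ is a weaker but still valid constant than the tight $2^{p-2}$, and the factor of $p$ from $\nabla F$ cancels the $\int_0^1 t^{p-1}\,dt$ in \eqref{ineq_1}), the argument is complete and produces a constant depending only on $p$, as required.
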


We will now prove Theorem \ref{main_t} and for the readers convenience we restate the theorem.

\begin{thm*} Suppose $ \frac{4}{3}<p<2$ and  $ g$ is bounded, H\"older continuous and satisfies
\[ \sup_{|x|>R, \ x_N \ge 0} | g(x)| \rightarrow 0 \qquad \mbox{ as $ R \rightarrow \infty$.}\]  Then there is a nonzero classical solution of $(\ref{pert_ha})$.
\end{thm*}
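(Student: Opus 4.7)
The plan is to execute the Banach fixed point strategy sketched in the introduction. Reducing to $(\ref{pert_ha_1})$ and freezing $t=1$, I seek a nonzero solution of the form $u = u_1 + \phi^*$, where $\phi^* \in B_R \subset X$ is a fixed point of the map $J_\lambda := J_{\lambda,1}$ from $(\ref{nonlinear_map})$. By Theorem $\ref{main_lin_theor}$ this map is well defined and satisfies $\|J_\lambda(\phi)\|_X \le C_1 \|F_\lambda(\phi)\|_Y$, where
\[ F_\lambda(\phi) := g(\lambda x)\,|\nabla u_1 + \nabla\phi|^p + E(\phi), \qquad E(\phi) := |\nabla u_1 + \nabla\phi|^p - |\nabla u_1|^p - p|\nabla u_1|^{p-2}\nabla u_1\cdot\nabla\phi. \]
Once a fixed point is produced, the H\"older continuity of $g$ together with standard Schauder estimates will upgrade $u = u_1 + \phi^*$ to a classical solution of $(\ref{pert_ha_1})$, and rescaling yields a classical solution of $(\ref{pert_ha})$. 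Nontriviality is automatic, since $\|u_1\|_X$ is an absolute positive constant depending only on $p, \sigma, \alpha$, while $R$ will be chosen small, so $\phi^* \neq -u_1$.

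The first main step is to show $J_\lambda : B_R \to B_R$ for $R$ small and $\lambda$ large. Inequality $(\ref{ineq_1})$ gives $|E(\phi)| \le C|\nabla\phi|^p$, and a direct computation (using $\alpha + 1 - p\alpha = \alpha + 1 - \gamma = 0$ on $x_N \ge 1$, and choosing $\sigma$ so small that $1 - \sigma(p-1) > 0$ on $0 < x_N \le 1$) yields $\||\nabla\phi|^p\|_Y \le C\|\phi\|_X^p \le CR^p$. The perturbation is split via $|g(\lambda x)||\nabla u_1 + \nabla\phi|^p \le 2^{p-1}|g(\lambda x)|(|\nabla u_1|^p + |\nabla\phi|^p)$; the $|\nabla\phi|^p$ piece absorbs into $\|g\|_\infty C R^p$. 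The key claim is that $\|g(\lambda \cdot)|\nabla u_1|^p\|_Y \to 0$ as $\lambda \to \infty$: on $x_N \ge 1$ this follows from $|\lambda x| \ge \lambda$ and the radial decay of $g$, while on $0 < x_N \le 1$ the horizontal variable is split into $|x'| \le \lambda^{-1/2}$ (where the weight $x_N^{\sigma+1}$ is at most $\lambda^{-(\sigma+1)/2}$) and $|x'| > \lambda^{-1/2}$ (where $|\lambda x| \ge \lambda^{1/2}$ is large). Thus $\|F_\lambda(\phi)\|_Y \le CR^p + \varepsilon(\lambda)$ with $\varepsilon(\lambda) \to 0$; since $p > 1$, choosing first $R$ small so that $C_1 C R^p \le R/2$ and then $\lambda$ large so that $C_1 \varepsilon(\lambda) \le R/2$ closes the estimate.

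For the contraction, inequalities $(\ref{ineq_2})$ and $(\ref{ineq_3})$ yield
\begin{align*}
|F_\lambda(\phi_1) - F_\lambda(\phi_2)| &\le C|g(\lambda x)|(|\nabla u_1|^{p-1} + |\nabla\phi_1|^{p-1} + |\nabla\phi_2|^{p-1})|\nabla\phi_1 - \nabla\phi_2| \\
&\quad + C(|\nabla\phi_1|^{p-1} + |\nabla\phi_2|^{p-1})|\nabla\phi_1 - \nabla\phi_2|.
\end{align*}
Using the explicit $|\nabla u_1| = ((p-1)x_N + 1)^{-\alpha}$ and the $X$-norm bound on $\nabla\phi_i$, the three terms involving $|\nabla\phi_i|^{p-1}$ produce $CR^{p-1}\|\phi_1 - \phi_2\|_X$ (again via $\alpha + 1 - p\alpha = 0$), and the remaining term $\|g(\lambda\cdot)|\nabla u_1|^{p-1}|\nabla(\phi_1-\phi_2)|\|_Y$ is controlled by the same domain-splitting argument as above and yields $\delta(\lambda)\|\phi_1 - \phi_2\|_X$ with $\delta(\lambda) \to 0$. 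Hence the Lipschitz constant of $J_\lambda$ is at most $C_1(CR^{p-1} + \delta(\lambda))$, which is $<1$ for $R$ small and $\lambda$ large, and Banach's fixed point theorem produces $\phi^*$.

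The main obstacle is not conceptual but bookkeeping: every contribution to $F_\lambda(\phi)$ and to $F_\lambda(\phi_1) - F_\lambda(\phi_2)$ must be checked in the $Y$-norm at two separate scales ($0 < x_N \le 1$ and $x_N \ge 1$), and in particular verifying $\|g(\lambda\cdot)|\nabla u_1|^p\|_Y \to 0$ requires the horizontal decomposition described above. Once $\phi^*$ is secured, the upgrade to a classical solution is routine: the bound $\phi^* \in \widehat{X}$ gives local $L^\infty$ control on $\nabla\phi^*$ and $\Delta\phi^*$ in the interior of $\IR^N_+$, so the right-hand side of the equation for $u = u_1 + \phi^*$ is locally H\"older (using that $g$ is H\"older), and standard Schauder estimates place $u \in C^{2,\theta}_{\mathrm{loc}}$ for some $\theta > 0$.
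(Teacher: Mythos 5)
Your overall strategy matches the paper exactly: reduce via scaling to $(\ref{pert_ha_1})$, fix $t=1$, define $J_\lambda$ through the linearized operator, invoke Theorem~\ref{main_lin_theor} together with inequalities $(\ref{ineq_1})$--$(\ref{ineq_3})$, and run a Banach fixed point argument on $B_R$, choosing $R$ small and then $\lambda$ large. The contraction estimate, the nontriviality check ($\|u_1\|_X$ bounded away from $0$ with $R$ small), and the Schauder upgrade to a classical solution are all in line with what the paper does.

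There is, however, a real slip in the ``into'' estimate. You claim that on the slab $0<x_N\le 1$ one can split the \emph{horizontal} variable into $|x'|\le \lambda^{-1/2}$ and $|x'|>\lambda^{-1/2}$, and that on the first region ``the weight $x_N^{\sigma+1}$ is at most $\lambda^{-(\sigma+1)/2}$.'' That conclusion does not follow: constraining $|x'|$ gives no control on $x_N$, which can still be close to $1$, so $x_N^{\sigma+1}\approx 1$ there and the supremum over $\{|x'|\le\lambda^{-1/2},\,0<x_N\le 1\}$ of $x_N^{\sigma+1}|g(\lambda x)||\nabla u_1|^p$ is $O(\|g\|_\infty)$, not small. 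The split has to be in the \emph{vertical} direction (or on $|x|$, which dominates $x_N$): for $0<x_N<\delta$ the weight gives $\delta^{\sigma+1}$, and for $\delta\le x_N\le 1$ one has $|\lambda x|\ge\lambda x_N\ge\lambda\delta$, so $|g(\lambda x)|\le A(\lambda\delta)$. This is exactly the decomposition the paper uses (keeping $\delta$ as a free parameter, chosen after $R$ and before $\lambda$; your choice $\delta=\lambda^{-1/2}$ would also work once the split is corrected). The same correction is needed when you invoke ``the same domain-splitting argument'' in the contraction step for $\|g(\lambda\cdot)|\nabla u_1|^{p-1}|\nabla(\phi_1-\phi_2)|\|_Y$. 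With $x'$ replaced by $x_N$ (or $|x|$) the rest of your proposal is sound.
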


\noindent
\textbf{Proof of Theorem \ref{main_t}.}  We will show that $ J_\lambda$ is a contraction mapping on $B_R$ as we outlined in the outline.
In what follows $ C$ is a constant that can change from line to line but is independent of $ \lambda$ and $ R$.\\

\noindent
\textbf{Into.} Let $ 0<R \le 1$,  $ \phi \in B_R$ and let $ \psi = J_\lambda(\phi)$. Then $\psi$ satisfies (\ref{nonlinear_map}) and by the linear theory (Theorem \ref{main_lin_theor}) and using (\ref{ineq_1}) we see that
\begin{eqnarray*}
\| J_\lambda(\phi) \|_X=\| \psi \|_X & \le &C \|g(\lambda x) | \nabla u_t|^p\|_Y + C \|g(\lambda x) | \nabla \phi |^p\|_Y + C \|  | \nabla u_t + \nabla \phi|^p \\&& - | \nabla u_t|^p -p | \nabla u_t |^{p-2} \nabla u_t \cdot \nabla \phi \|_Y \\
& \le & C \|g(\lambda x) | \nabla u_t|^p\|_Y + C \| | \nabla \phi|^p \|_Y
\end{eqnarray*} since $g$ is bounded.  Using the bound on $ \phi$ we see that $ \| | \nabla \phi |^p \|_Y \le C R^p$ and we now examine the other term.  So towards this we set
\[ I^1_\lambda := \sup_{0<x_N<1} x_N^{\sigma+1} |g(\lambda x) | | \nabla u_t|^p \le C \sup_{0<x_N<1} \frac{ x_N^{\sigma+1}}{(x_N+t)^{\alpha p}} |g( \lambda x)|, \quad \mbox{ and }\]

\[ I^2_\lambda := \sup_{x_N>1} x_N^{\alpha+1} |g(\lambda x) | | \nabla u_t|^p \le C \sup_{x_N>1} \frac{ x_N^{\alpha+1}}{(x_N+t)^{\alpha p}} |g( \lambda x)|,\] and note that $ \|g(\lambda x) | \nabla u_t|^p \|_Y \le I_\lambda^1 + I_\lambda^2$.   Let $ 0<\delta<1$ (small). Set $A(T):= \sup_{x_N>0, |x|>T}|g(x)|$ and recall that $A(T) \rightarrow 0$ as $ T \rightarrow \infty$.   Then
\begin{eqnarray*}
I_\lambda^1 &\le & C\sup_{0<x_N<\delta} \frac{ x_N^{\sigma+1}}{(x_N+t)^{\alpha p}} |g( \lambda x)| +C\sup_{\delta<x_N<1} \frac{ x_N^{\sigma+1}}{(x_N+t)^{\alpha p}} |g( \lambda x)| \\
& \le & C \delta^{\sigma+1} +C A(\lambda \delta).
\end{eqnarray*} Similarly one sees that $I_\lambda^2 \le C A(\lambda)$.  Combining the above results and using the fact that $ A$ is monotonic we see that
\[ \|J_\lambda(\phi) \|_X \le C \left\{ R^p + \delta^{\sigma+1} +A(\lambda \delta) \right\}.\] So for $J_\lambda(B_R) \subset B_R$ it is sufficient that
\begin{equation} \label{into_con}
C \left\{ R^p + \delta^{\sigma+1} +A(\lambda \delta) \right\} \le R.
\end{equation} \\

\noindent
\textbf{Contraction.} Let $ 0<R \le 1$,  $ \phi_i \in B_R$ and $ \psi_i = J_\lambda(\phi_i)$, $i=1,2$.  Writing out the equations for $ \psi_2$ and $ \psi_1$ and taking a difference and using (\ref{ineq_2}) and (\ref{ineq_3}) we arrive at
\[\|  J_\lambda(\phi_2) -  J_\lambda(\phi_1)|_X= \| \psi_2 - \psi_1\|_X \le C H_\lambda + C K_\lambda,\] where
\[ H_\lambda = \big\| g(\lambda x) \left\{ | \nabla u_t|^{p-1} + | \nabla \phi_2|^{p-1} + | \nabla \phi_1|^{p-1} \right\} | \nabla \phi_2 - \nabla \phi_1| \big\|_Y, \qquad \mbox{ and }\]

\[ K_\lambda = \big\|\left\{ | \nabla \phi_2 |^{p-1} + | \nabla \phi_1|^{p-1}  \right\} | \nabla \phi_2 - \nabla \phi_1| \|_Y.\]  We first estimate $K_\lambda$.  So using the bound on $ \phi_2$ one can see
\begin{eqnarray*}
\sup_{0<x_N<1} x_N^{\sigma+1} | \nabla \phi_2|^{p-1} | \nabla \phi_2 - \nabla \phi_1| & \le & R^{p-1}\sup_{0<x_N<1} x_N^{\sigma+1-\sigma(p-1)-\sigma} x_N^\sigma | \nabla \phi_2 - \nabla \phi_1| \\
& \le & R^{p-1} \| \phi_2 - \phi_1 \|_X
\end{eqnarray*}
provided $ \sigma+1-\sigma(p-1)-\sigma \ge 0$, which is satisfied after recalling we are taking $ \sigma>0$ very small. A similar argument shows that
\[ \sup_{x_N>1} x_N^{\alpha+1} | \nabla \phi_2|^{p-1} | \nabla \phi_2 - \nabla \phi_1| \le R^{p-1} \| \phi_2 - \phi_1\|_X \sup_{x_N>1} x_N^{\alpha+1-\alpha - \alpha(p-1)},\] and so here we need the exponent to be less or equal zero and note that this exponent is zero.  Combining these two results we see that $K_\lambda \le C R^{p-1} \| \phi_2 - \phi_1\|_X$.

We now examine the $H_\lambda$ term.

\begin{itemize}
    \item First we examine the term $ \| g(\lambda x) | \nabla u_t|^{p-1} | \nabla \phi_2 - \nabla \phi_1| \|_Y.$  Using an argument as before one has
    \[ \sup_{0<x_N<1} x_N |g(\lambda x)| \le C \delta + C A(\lambda \delta).\] A computation shows that
    \begin{eqnarray*}
    \sup_{0<x_N<1} | g(\lambda x)| | \nabla u_t|^{p-1} | \nabla \phi_2 - \nabla \phi_1| &\le& C \| \phi_2 - \phi_1\|_X \sup_{0<x_N<1}  x_N |g(\lambda x)| \\
    & \le & C (\delta+A(\lambda \delta)) \|\phi_2 - \phi_1 \|_X.
    \end{eqnarray*} We now examine the outer portion of the norm,
    \begin{eqnarray*}
    \sup_{x_N>1} x_N^{\alpha+1} |g(\lambda x)| | \nabla u_t|^{p-1} | \nabla \phi_2 - \nabla \phi_1| & = & C \sup_{x_N>1} \frac{ x_N}{x_N+t} | g(\lambda x)|  \left\{ x_N^\alpha | \nabla \phi_2- \nabla \phi_1| \right\} \\
    & \le & CA(\lambda) \| \phi_2 -  \phi_1\|_X.
    \end{eqnarray*} Combining the results gives
     \begin{equation} \label{one_tt}
     \| g(\lambda x) | \nabla u_t|^{p-1} | \nabla \phi_2 - \nabla \phi_1| \|_Y \le C\left\{ \delta + A(\lambda \delta) \right\} \| \phi_2 - \phi_1\|_X
     \end{equation} after using monotonicity of $A$.

 \item We now examine the term  $ \big\| g(\lambda x)  | \nabla \phi_2|^{p-1}  | \nabla \phi_2 - \nabla \phi_1 | \big\|_Y.$ Using the estimate for $ \phi_2$ one sees that
 \[ \sup_{0<x_N<1} x_N^{\sigma+1} | g(\lambda x)| | \nabla \phi_2|^{p-1} | \nabla \phi_2 - \nabla \phi_1| \le R^{p-1} \| \phi_2 - \phi_1\|_X \sup_{0<x_N<1} x_N^{1-\sigma(p-1)} |g(\lambda x)|.\] A computation as before shows that
 \[ \sup_{0<x_N<1} x_N^{1-\sigma(p-1)} |g(\lambda x)| \le C \delta^{1-\sigma(p-1)} + C A(\lambda \delta),\] and hence
 \[  \sup_{0<x_N<1} x_N^{\sigma+1} | g(\lambda x)| | \nabla \phi_2|^{p-1} | \nabla \phi_2 - \nabla \phi_1| \le C R^{p-1} \left\{\delta^{1-\sigma(p-1)} +  A(\lambda \delta) \right\} \| \phi_2 - \phi_1 \|_X.\] Similarly the outer portion of the norm gives
 \begin{eqnarray*}
 \sup_{x_N>1} x_N^{\alpha+1}|g(\lambda x)| | \nabla \phi_2|^{p-1} | \nabla \phi_2 - \nabla \phi_1| &\le& R^{p-1} \sup_{x_N>1} |g(\lambda x)| x_N^{\alpha} | \nabla \phi_2 - \nabla \phi_1| \\
 &\le & R^{p-1} A(\lambda) \| \phi_2 - \phi_1\|_X,
 \end{eqnarray*}
and hence combining these two results gives
 \begin{equation}
     \label{one_ttt}
 \big\| g(\lambda x)  | \nabla \phi_2|^{p-1}  | \nabla \phi_2 - \nabla \phi_1 \big\|_Y  \le C R^{p-1} \left\{ \delta^{1-\sigma(p-1)} + A(\lambda \delta) \right\} \| \phi_2 - \phi_1\|_X,
     \end{equation} where again we have used the monotonicity of $A$.

   \end{itemize}

     Combining with the previous results gives
     \[ H_\lambda \le C \left\{ \delta + A(\lambda \delta) +R^{p-1} \left( \delta^{1-\sigma(p-1)} +A(\lambda \delta) \right) \right\} \| \phi_2 - \phi_1 \|_X.\]  Combining the estimates for  $H_\lambda$ and $K_\lambda$ shows that

  \[\|  J_\lambda(\phi_2) -  J_\lambda(\phi_1)|_X\le
  C \left\{R^{p-1}+ \delta + A(\lambda \delta) +R^{p-1} \left( \delta^{1-\sigma(p-1)} +A(\lambda \delta) \right) \right\} \| \phi_2 - \phi_1 \|_X.\]
 Hence,  $J_\lambda$ is a contraction on $B_R$ provided
     \begin{equation} \label{contr_need}
     C \left\{ R^{p-1}+\delta + A(\lambda \delta) +R^{p-1} \left( \delta^{1-\sigma(p-1)} +A(\lambda \delta) \right) \right\} \le \frac{3}{4}.
     \end{equation}

    So for $J_\lambda$ to be a self-map and
contraction mapping on $B_R$ we need both (\ref{into_con}) and (\ref{contr_need}) to hold.   To pick the $ R,\delta,\lambda$ one first chooses $ R>0$ very small but fixed, then fixes $ \delta$ very small and finally picks $ \lambda$ very big.
    Once $ J_\lambda$ is a contraction we can use Banach's Contraction Mapping Principle to see there is a fixed point $ \phi \in B_R$ and hence we see that  $ u(x) = u_t(x) + \phi(x)$ is a solution of (\ref{pert_ha_1}).  Note that $u_t$ is smooth and the gradient of $ \phi$ can have slight blow up at $ x_N=0$;  depending on $ \sigma>0$.  By taking $ \sigma>0$ very small one can apply elliptic regularity to see that $u$ is  a classical solution. To see that $u$ is not identically zero one needs to choose $R>0$ sufficiently small (relative to $ \|u_t\|_X$) and then one sees that $| \nabla u(x)|>0$ for $x_N>1$ (for instance).
    \hfill $\Box$

\section*{Acknowledgment} A. Aghajani was partially supported by Grant from IPM (No. 1400350211). C. Cowan and S. H. Lui were partially supported by grants from NSERC.

\end{document}